\newcommand\reallywidehat[1]{%
\savestack{\tmpbox}{\stretchto{%
  \scaleto{%
    \scalerel*[\widthof{\ensuremath{#1}}]{\kern-.6pt\bigwedge\kern-.6pt}%
    {\rule[-\textheight/2]{1ex}{\textheight}}%WIDTH-LIMITED BIG WEDGE
  }{\textheight}% 
}{0.5ex}}%
\stackon[1pt]{#1}{\tmpbox}%
}
\newtheorem{thm}{Theorem}[section]
\newtheorem{lem}[thm]{Lemma}
\newtheorem{prop}[thm]{Proposition}
\newtheorem*{conjecture*}{Conjecture}
\newtheorem*{thm*}{Theorem}
\theoremstyle{definition}
\newtheorem{define}[thm]{Definition}
\newtheorem{remark}{Remark}
\newtheorem{example}[thm]{Example}
\newcommand{\1}{\mathbf{1}}
\newcommand{\Z}{\mathbb{Z}}
\newcommand{\K}{\Bbbk}
\newcommand{\BasedAlg}{\operatorname{BasedAlg}}
\newcommand{\rank}{\operatorname{rank}}
\newcommand{\Bim}{\operatorname{Bim}}
\newcommand{\PSU}{\operatorname{PSU}}
\newcommand{\id}{\operatorname{id}}
\newcommand{\Hom}{\operatorname{Hom}}
\newcommand{\ModAlg}{\operatorname{ModAlg}}
\newcommand{\Irr}{\operatorname{Irr}}
\newcommand{\Mod}{\operatorname{Mod}}
\newcommand{\End}{\operatorname{End}}
\newcommand{\KQ}{\operatorname{\K Q}}
\newcommand{\Id}{\operatorname{Id}}
\newcommand{\VecC}{\operatorname{Vec}}
\newcommand{\Fib}{\operatorname{Fib}}
\newcommand{\Rep}{\operatorname{Rep}}
\newcommand{\Fun}{\operatorname{Fun}}
\newcommand{\coRep}{\operatorname{coRep}}
\newcommand{\WHA}[7]
{
    \begin{tikzpicture}[x=0.75pt,y=0.75pt,yscale=-1,xscale=1]
%uncomment if required: \path (0,300); %set diagram left start at 0, and has height of 300

%Straight Lines [id:da35668641303755466] 
\draw    (223.99,132.68) -- (223.99,177.28) ;
%Straight Lines [id:da8470984922090323] 
\draw    (223.99,57.69) -- (223.99,102.29) ;
%Shape: Arc [id:dp32893214729866416] 
\draw  [draw opacity=0] (224,154.98) .. controls (206.91,154.97) and (193.05,138.18) .. (193.05,117.48) .. controls (193.05,96.78) and (206.91,80) .. (224,79.99) -- (224.02,117.48) -- cycle ; \draw   (224,154.98) .. controls (206.91,154.97) and (193.05,138.18) .. (193.05,117.48) .. controls (193.05,96.78) and (206.91,80) .. (224,79.99) ;  

% Text Node
\draw (177.73,112) node [anchor=north west][inner sep=0.75pt]    {$#7$};
% Text Node
\draw (219.03,180) node [anchor=north west][inner sep=0.75pt]    {$#1$};
% Text Node
\draw (226.25,148) node [anchor=north west][inner sep=0.75pt]    {$#6$};
% Text Node
\draw (220,120) node [anchor=north west][inner sep=0.75pt]  {$#3$};
% Text Node
\draw (220,103.64) node [anchor=north west][inner sep=0.75pt]    {$#2$};
% Text Node
\draw (218.99,42) node [anchor=north west][inner sep=0.75pt]    {$#4$};
% Text Node
\draw (226.25,71.87) node [anchor=north west][inner sep=0.75pt]    {$#5$};

\end{tikzpicture}
}
\title{Weak Hopf algebra actions as fusion category actions}
\author{Alexander Betz}
\date{}
\begin{document}
\maketitle

\maketitle
\begin{abstract}
This article develops the theory of fusion categories acting on algebras. We will demonstrate that weak Hopf algebra actions on algebras correspond to specific actions of fusion categories. As an application of this theory, we introduce a family of filtered actions of weak Hopf algebras on the path algebra, and for weak Hopf algebras whose representation categories are equivalent to $\PSU(2)_{p-2}$, we describe all of the separable based fusion category actions in terms of weak Hopf algebra actions.
\end{abstract}

\tableofcontents

\section{Introduction}

The objective of this paper is to study quantum symmetries of algebras through the lens of fusion category actions. It's well established that finite groups classify the finite symmetries of an algebra. In the framework of quantum symmetries, finite-dimensional semisimple weak Hopf algebra actions on algebras provide a natural generalization of group actions. Fusion category actions are another realization of quantum symmetries of an algebra, generalizing weak Hopf algebra actions. Fusion categories act on algebra by acting on their category of bimodules, which can be thought of as the non-invertible symmetries of an algebra. We will study these non-invertible symmetries through classifying monoidal functors $F: C\rightarrow \Bim(A)$. Historically, this problem has been studied where $A$ is a $C^*$ algebra \cite{HP17,HHP20,EJ24,CJHP24}. Recently, the algebraic setting where $A$ is assumed to be an associative algebra was explored in \cite{Bet25}, laying the groundwork for a purely algebraic approach to fusion category actions.

Weak Hopf algebras, also known in the literature as quantum groupoids or quantum semigroupoids, are generalizations of Hopf algebras that relax the axioms concerning the unit and counit—specifically, weakening the multiplicativity of the counit and the comultiplicativity of the unit. They were introduced in \cite{BS96} and then further developed in \cite{BNS99, BS00,Szl01,NV02}. It was shown in \cite{Ost03} that the representation category of a semisimple weak Hopf algebra is equivalent to a fusion category. This paper aims to connect actions of fusion categories on algebras and $H$-module algebras for a finite-dimensional semisimple weak Hopf Algebra $H$, building off the results in \cite{Bet25}. We produce the following theorem connecting these two types of actions,

\begin{thm}
    Let $H$ be a semisimple weak Hopf algebra. Equivalence classes of based actions of fusion categories of $\coRep(H)$ on $A$ over $F_H$ are in bijection with left $H$ actions on $A$ up to $J$-twisted isomorphism of $H$.
\end{thm}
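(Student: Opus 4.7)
The plan is to construct mutually inverse maps in both directions and then verify that these maps descend to bijections on equivalence classes. For the forward direction, given a left $H$-module algebra $A$, I would define a based monoidal functor $F_A: \coRep(H) \to \Bim(A)$ by sending a comodule $V$ to $V \otimes_{H_t} A$, with the right $A$-action twisted using the coaction on $V$ together with the $H$-action on $A$, and the left $A$-action on the second tensorand. The monoidal structure morphisms come from combining the coaction on tensor products of comodules with the module-algebra axioms, and the based structure and the lifting over $F_H$ are visible from the underlying vector spaces. Verifying that this is a well-defined based monoidal functor lying over $F_H$ amounts to writing down diagrammatic conditions, which reduces to the weak Hopf algebra axioms.

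For the reverse direction, given a based monoidal functor $F: \coRep(H) \to \Bim(A)$ over $F_H$, I would apply the reconstruction machinery from \cite{Bet25}. The lifting over $F_H$ forces $F(V)$ to agree with $V \otimes_{H_t} A$ at the level of underlying based vector spaces, and $F(\mathbf{1})$ is identified with $A$ as a based algebra via the monoidal unit structure. Evaluating the monoidal coherence morphisms of $F$ on the regular comodule $H \in \coRep(H)$, together with the coevaluation, then produces a candidate action map $H \otimes A \to A$; coherence with the monoidal unit and the associator forces this to satisfy the $H$-module-algebra axioms. Showing that this construction is inverse to the forward direction reduces to straightforward diagram chases, using that $\coRep(H)$ is generated by the regular comodule.

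The heart of the theorem, and the main obstacle, is to identify the equivalence relation on the categorical side with $J$-twisted isomorphism on the algebraic side. A monoidal natural isomorphism between two based actions $F_A$ and $F_{A'}$ over $F_H$ consists of $A$-bimodule maps that, at the level of underlying spaces, are controlled by multiplication by an invertible element of the appropriate tensor power of $H$. Extracting this data on the regular comodule should yield an algebra isomorphism $\phi: A \to A'$ together with a twist $J \in H \otimes H$ intertwining the two actions in the $J$-twisted sense; the hexagon for the natural isomorphism produces precisely the cocycle condition on $J$, while compatibility with the unit constraints handles the counital piece. Conversely, any $J$-twisted isomorphism assembles into a monoidal natural isomorphism between the associated based functors. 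The delicate part of this last step is keeping track of the weak Hopf subtleties, in particular the counital subalgebras $H_t$ and $H_s$, the nontrivial unit object of $\coRep(H)$, and the interaction of all of this with the based structure, so that the equivalence induced on the categorical side matches exactly the notion of $J$-twisted isomorphism and nothing extra or missing slips through.
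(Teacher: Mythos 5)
Your two constructions match the paper's: the forward direction is the paper's Lemma~\ref{lem:Hopf action gives based action} (the bimodule $A\otimes_{H^t}V$ with left multiplication and right action through the coaction --- your tensor factors are mirrored but that is cosmetic), and the reverse direction is Lemma~\ref{lem:based action give Hopf action}, where the action is read off from the half-braiding on the regular comodule exactly as you propose. Up to that point the proposal is sound and follows the paper's route.

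The gap is in the step you correctly identify as the heart of the theorem: matching the equivalence relations. You interpret ``$J$-twisted isomorphism of $H$'' as an algebra isomorphism $\phi:A\to A'$ together with a Drinfeld-type twist $J\in H\otimes H$ satisfying a cocycle condition extracted from a hexagon. That is not the notion the theorem uses, and the data you describe does not actually arise. Both based actions live on the \emph{same} algebra $A$, and a monoidal natural isomorphism $\alpha$ between them has components that are $A$-bimodule maps; on the unit object the unit coherence forces $\alpha_{\1}$ to be (essentially) the identity of $A$, so no isomorphism of $A$ appears. The relevant datum is instead the restriction of $\alpha$ to the base spaces of the regular comodule, giving a linear isomorphism $\alpha_H:H_1\to H_2$ between the two copies of $H$; naturality applied to the multiplication morphism $m:H\otimes H\to H$ (a comodule map) combined with monoidality yields
\[
m\circ J^2_{H_2,H_2}\circ(\alpha_{H}\otimes\alpha_{H})=\alpha_{H}\circ m\circ J^1_{H_1,H_1},
\]
i.e.\ $\alpha_H$ intertwines the multiplications only up to the monoidal structure maps $J^i$ of the two functors --- this is what ``$J$-twisted'' means here. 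No element of $H\otimes H$ and no cocycle condition enter: a monoidal natural isomorphism between two functors each carrying its own fixed monoidal structure does not deform those structures, which is where a Drinfeld twist would come from. As written, your argument would establish a bijection with a different quotient of the set of $H$-module algebra structures, so the equivalence-relation half of the proof needs to be redone: extract $\alpha_H$ from the base spaces of $F(H)$, verify the displayed intertwining relation, and conversely show that a component morphism of a monoidal natural automorphism of $F:C\to\Bim(H^t)$ satisfying it induces a monoidal natural isomorphism of the associated based actions.
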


We apply this theoretical framework to classifying quantum symmetries of the path algebra through the lens of weak Hopf algebra actions. It's a classic result that all finite-dimensional algebras are Morita equivalent to a quotient of a path algebra, thus motivating our interest in studying the symmetries of path algebras. In particular, \cite{EKW21} studied actions of finite-dimensional Hopf algebras on path algebras preserving the canonical grading on $\KQ$. Building on this, \cite{Bet25} extended these results to graded preserving actions of fusion categories on path algebras. For the family of fusion categories $\PSU(2)_{p-2}$, it was shown that all separable based actions were graded actions. From the perspective of weak Hopf algebras, there have been connections established between
face algebras and their (co)actions on quivers \cite{HWWW,CW23, Hay96}. There has also been some progress in studying weak Hopf algebra actions on algebras \cite{CHWW,Szl00,NSW98}. We will use these results as motivation to classify some weak Hopf algebra actions on the path algebra.

\begin{thm}
    There is a family of graded actions of weak Hopf algebras up to $J-$ twisted algebra isomorphism on path algebras classified by the following data,
     \begin{enumerate}
        \item A semisimple $\coRep(H)$ module category structure on $M$ up to invertible $\coRep(H)$ module functors of the form $(\Id,\rho)$.
        \item A conjugacy class of objects $[(Q,\phi_{-})]$ in $\End_{\coRep(H)}(M)$ up to conjugation by 
 a $\coRep(H)$ module endofunctors of the form $(\Id,\rho)$. 
   \end{enumerate} 
\end{thm}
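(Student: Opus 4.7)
The plan is to chain the two classifications already available: the weak-Hopf / fusion-category dictionary from the first theorem of the paper, and the classification of graded fusion category actions on path algebras from \cite{Bet25}. Concretely, I would start on the weak Hopf side: given a graded action of $H$ on $\KQ$, pass to the corresponding based action of $\coRep(H)$ on $\KQ$ over the forgetful functor $F_H$ using the first theorem. I would then check that the grading-preservation condition on the $H$-action is equivalent to the resulting $\coRep(H)$-action being a graded based action in the sense of \cite{Bet25}; this should be essentially formal since the grading on $\KQ$ is preserved by the bimodule structures produced by the functor $F_H \otimes (-)$ and the translation between the two types of action is natural in the algebra.

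Once we are on the fusion category side, I would invoke the main classification result of \cite{Bet25}: graded based actions of a fusion category $\mathcal{C}$ on $\KQ$ are classified (up to based equivalence) by pairs consisting of a semisimple $\mathcal{C}$-module category $M$ together with an object $(Q,\phi_-)$ of $\End_{\mathcal{C}}(M)$. Substituting $\mathcal{C} = \coRep(H)$ would immediately yield the data listed in items (1) and (2) of the theorem, \emph{provided} the equivalence relations on both sides can be matched. This matching is the content of the next step.

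The heart of the argument, and the step I expect to be the main obstacle, is verifying that $J$-twisted algebra isomorphism on the weak Hopf side corresponds exactly to the restricted equivalence by invertible $\coRep(H)$ module functors and endofunctors of the form $(\Id,\rho)$ on the fusion category side. The idea is that a general based equivalence between actions is built from a module equivalence of the ambient module categories together with a compatible isomorphism of the distinguished object; a $J$-twist on $H$ acts on the coalgebra structure without moving the underlying algebra, so after passage through the first theorem it should manifest as precisely the identity-on-objects datum $\rho$ that retwists the module constraint. I would prove both directions by tracking the cocycle data: given a $J$-twisted isomorphism $\phi: A^J \to A'$, construct the $(\Id,\rho)$-module functor explicitly from $J$ and $\phi$, and conversely, reconstruct $J$ and $\phi$ from the components of a $(\Id,\rho)$-equivalence by evaluating on the regular $H$-comodule.

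Finally, I would assemble the bijection by composing the two equivalences and checking that it is well defined on equivalence classes and surjects onto the classifying data, completing the proof. The verification that the composite is a bijection is routine once the two equivalence relations are matched; therefore the technical heart really lies in the cocycle bookkeeping of the previous paragraph, and in particular in showing that every $(\Id,\rho)$ module functor arises from some $J$-twist of $H$, which amounts to identifying $\rho$ with an invertible element of $H \otimes H$ satisfying the twist equations.
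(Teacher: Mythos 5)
Your proposal is correct and follows essentially the same route as the paper: chain the classification of graded/filtered fusion category actions on path algebras from \cite{Bet25} with the dictionary between based actions over $F_H$ and weak Hopf algebra actions. The only difference is that the equivalence-relation matching you identify as the ``main obstacle'' is already packaged in Lemma \ref{lem:idem split FH} (graded separable idempotent split based actions up to conjugacy correspond to weak Hopf actions up to $J$-twisted algebra isomorphism), so no explicit bookkeeping with twist elements of $H\otimes H$ is needed.
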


We observe that for graded separable idempotent split based actions of fusion categories on $\KQ$ defined in \cite{Bet25}, are actually weak Hopf algebra actions on $\KQ$ and then apply the full classification of $\PSU(2)_{p-2}$ action on path algebras and explain those actions in terms of weak Hopf algebra data.

\begin{thm}
  Let $H$ be a weak Hopf algebra generated by $\PSU(2)_{p-2}$ and a semisimple module category of $\PSU(2)_{p-2}.$ Then all actions of $H$ on $\KQ$ are graded actions.
\end{thm}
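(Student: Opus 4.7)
The strategy is to reduce the problem to the known classification of $\PSU(2)_{p-2}$ fusion category actions on path algebras via the bijection established in the first theorem of the introduction, and then transport the result back through that bijection.

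First, I would apply the bijection to translate any given $H$-action on $\KQ$ into an equivalent based action of $\coRep(H)$ on $\KQ$ over $F_H$. Because $H$ is generated by $\PSU(2)_{p-2}$ together with a semisimple module category, the fusion category $\coRep(H)$ is the module-theoretic extension of $\PSU(2)_{p-2}$ built from that module category, so the resulting based action is determined by a based action of $\PSU(2)_{p-2}$ on $\KQ$. Semisimplicity of $H$ should supply the separability required to apply the classification of \cite{Bet25}.

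Second, I would invoke the main classification of \cite{Bet25}, which states that every separable based action of $\PSU(2)_{p-2}$ on a path algebra is a graded action. Combined with the first step, this forces the associated $\coRep(H)$-action on $\KQ$ to be graded. Finally, using the observation stated just before this theorem---namely, that graded separable idempotent-split based fusion category actions on $\KQ$ correspond under the bijection of the first theorem to weak Hopf algebra actions---I would conclude that the original $H$-action on $\KQ$ is itself a graded action, which in turn can be described by the data of the second theorem.

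The main obstacle will be ensuring the compatibility of the \emph{graded} condition on both sides of the bijection. On the fusion category side, being graded means the monoidal functor $\coRep(H)\to\Bim(\KQ)$ lands in graded bimodules over the path-length grading, while on the weak Hopf algebra side it means the $H$-action preserves the canonical grading on $\KQ$; confirming these notions are interchanged by the bijection requires unpacking the construction of the first theorem. A secondary technical point is verifying that the phrase \emph{generated by $\PSU(2)_{p-2}$ and a semisimple module category} precisely identifies $\coRep(H)$ with a fusion category whose separable based actions are covered by the $\PSU(2)_{p-2}$ classification, so that \cite{Bet25} applies without ambiguity.
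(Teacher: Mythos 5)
Your proposal follows essentially the same route as the paper: translate the $H$-action into a separable (idempotent split, up to conjugacy) based action of $\coRep(H)\cong\PSU(2)_{p-2}$ over $F_H$, invoke the classification from \cite{Bet25} that all such actions on $\KQ$ are graded, and carry the grading back through Lemma \ref{lem:idem split FH}. One small correction: $\coRep(H)$ is monoidally equivalent to $\PSU(2)_{p-2}$ itself rather than a ``module-theoretic extension'' of it --- the choice of semisimple module category only changes the target algebra $H^t$ and hence the weak fiber functor $F_H$, not the acting fusion category --- but this does not affect the argument.
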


\section{\texorpdfstring{\centering Preliminaries}{Preliminaries}} 

This section contains the necessary material about tensor categories, and weak Hopf algebras needed for the rest of the paper. 

\subsection{Tensor Categories}

For the rest of the paper, $X, Y,Z$ are objects in a category. An object is simple in a linear category if $\Hom(X, X)\cong \K$. A monoidal category is a quintuple $(C,\otimes,a,\1,i)$ where $C$ is a category, $\otimes:C\times C\rightarrow C$ is a bifunctor, $a:(X\otimes Y)\otimes Z\rightarrow X\otimes (Y\otimes Z)$ is a natural isomorphism, $\1$ is the unit object and $i:\1\otimes \1\rightarrow \1$ is an isomorphism satisfying some categorical coherences. A monoidal category is said to be rigid if for an object $X$ there exists a left and right dual satisfying the classical duality relations. 

A monoidal functor from $(C,\otimes,a,\1,i)$ to $(C',\otimes',a',\1',i')$ is a pair $(F,J_{-,-})$ where $F:C\rightarrow C'$ is a functor and $J_{-,-}$ is a natural isomorphism between $F(X\otimes Y)$ and $F(X)\otimes F(Y)$ satisfying some coherences. A monoidal functor is called a lax monoidal functor if $J_{-,-}$ is a natural transformation. A fusion category is a finite $\K$ linear rigid abelian semisimple monoidal category such that the unit object is simple. Examples of fusion categories are $\Rep(H)$ where $H$ is a semisimple Hopf algebra and $\Fib$ is the Fibonacci Category. 
\begin{example}
    A non-trivial example of a family of fusion categories is $\PSU(2)_{p-2}$. These categories have simple objects indexed by $\{X_{2j},0\leq j\leq \frac{p-3}{2}\}$ where $X_0\cong \1$. These simple objects satify the fusion rules,
\[X_{2j}\otimes X_{2i}=\sum N_{2i,2j}^{2m}V_{2m}.\]

Where the fusion coefficient $N_{2i,2j}^{2m}$ is,

\[N_{2i,2j}^{2m}=
\begin{cases}
    1 \hspace{1mm}\text{if} \hspace{1mm} |2i-2j|\leq 2m\leq \min \{2i+2j, 2(p-2)-2i-2j \} \\
    0 \hspace{1mm} \text{otherwise}
    
\end{cases}.\]

The dimension of a simple object $X_{2j}$, is the quantum integer $[2j+1]_q$ where $q$ is a $2^{pth}$ root of unity. Since this number is irrational except when $j=0$ it follows that these categories cannot be monoidally equivalent to $\Rep(H)$ for a semisimple Hopf algebra $H$. The Fibonacci Category, $\Fib$, is a the first non-trivial fusion category in this family $\Fib \cong \PSU(2)_{5-2}$. By the fusion rules above, $\PSU(2)_3$ has two simple elements $1$ and $\tau$ satisfying $\tau \otimes \tau \cong 1\oplus \tau$.
\end{example}

It's a well-known result that all fusion categories are equivalent to the (co)representation categories of a weak Hopf Algebra \cite{EGNO16}. A left module category over a monoidal category $C$ is a category $M$ equipped with a bifunctor $\triangleright:C\times M\rightarrow M$ and a natural isomorphism $m_{X,Y,Z}:X\triangleright (Y\triangleright Z)\rightarrow (X\otimes Y)\triangleright Z$ satisfying some coherence. In \cite[Chapter 7]{EGNO16} it is shown that module category structures on $M$ are in bijection with monoidal functors $F:C\rightarrow \End(M)$. There is an important theorem by Eilenberg and Watts \cite{Eil60,Wat} that states if  $R$ and $S$ are rings, then 
\[\Bim(R,S) \cong \Fun_{coc}(\Mod_R,\Mod_S).\]
Where $\Fun_{coc}$ is the category of colimit-preserving additive functors. This theorem is essential for our work since it states that a module category structure on $\Mod(A)$ is equivalent to a monoidal functor $F:C\rightarrow \Bim(A)$. Given two $C$ module categories $M$ and $N$ we can define a $C$ module functor $(G,\eta_{-,-})$ where $G:N\rightarrow M$ is a functor and $\eta_{X,M}:X\triangleright G(M)\rightarrow G(X\triangleright M)$ is a natural isomorphism. The category of $C$ module endofunctors $\End_C(M)$ is a monoidal category. If $C$ is a fusion category and $M$ is a semisimple module category, then $\End_C(M)$ is a multifusion category. If, in addition, $M$ is indecomposable, then $\End_C(M)$ is a fusion category. For a fusion category $C$, an algebra $A$ internal to $C$ is a triple $(A,m,i)$ where $A$ is an object in $C$, and $m:A\otimes A$, $i:\1\rightarrow A$ are multiplication and the unit morphisms satisfying the usual algebra coherences. A right module $X$ over $A$ is a pair $(X,\rho)$ where $X$ is an object in $C$ and $\rho:X\otimes A \rightarrow A$ is a morphism satisfying the usual module coherences. There is a natural left $C$-Module structure on the category of right $A$ modules $\Mod(A)$ for some internal algebra $A$ using the  associator and the action of $A$ on modules. It was shown in \cite{Ost03} that all semisimple module categories in a fusion category can be understood in terms of categories of modules over an algebra internal to $A$. 
If $M \cong \Mod(A)$ is a left $C$-Module category, then the category of $C$ module endofunctors $\End_C(M) \cong \Bim(A)^{op}$ the opposite tensor product category \protect\cite{EGNO16}.

\subsection{Weak Hopf Algebras}

In this subsection, we will introduce the basics of weak Hopf algebras. We will use Sweedlers notation for comultiplication $\Delta(x)=x_{(1)}\otimes x_{(2)}$. For the rest of this paper, $H$ will denote a finite-dimensional semisimple weak Hopf algebra.
\begin{define}
    A weak Hopf algebra $H$ is a sixtuple $(H,m,u,\Delta,\epsilon,S)$ where $(H,m,u)$ is an algebra, $(H,\delta,\epsilon)$ is a coalgebra and $S$ is a linear map called the antipode satisfying,

    \begin{enumerate}
        \item $\Delta(gh)=\Delta(g)\Delta(h)$.
        \item $(\Delta\otimes \id)\Delta(1)=(\Delta(1)\otimes 1)(1\otimes \Delta(1))=(1\otimes \Delta(1))(\Delta(1)\otimes 1).$
        \item $\epsilon(ghf)=\epsilon(fg_{(1)})\epsilon(g_{(2)}h)=\epsilon(fg_{(2)})\epsilon(g_{(1)}h).$

        \item $m(\id\otimes S)\Delta(h)=(\epsilon \otimes \id)(\delta(1))(h\otimes 1).$
        \item $m(S\otimes \id)\Delta(h)=(\id \otimes \epsilon)(1\otimes h)(\Delta(1)).$
        \item $S(h)=S(h_1)h_2S(h_3).$
    \end{enumerate}
\end{define}

The main difference between a Hopf algebra and a weak Hopf algebra is the weakening of the multiplication of the counit and the weakening of the comultiplication of the unit. If a weak Hopf algebra has a unit-preserving coproduct, or a product-preserving counit, then it is in fact a Hopf algebra. The easiest example of a weak Hopf algebra is the direct sum of two Hopf algebras. 
There are two special maps for a weak Hopf algebra, denoted the target and source maps. They are defined as follows,

\[ \epsilon_t(h)=\epsilon(1_{(1)}h)1_{(2)},\hspace{1mm} \epsilon_s(h)=1_{(1)}\epsilon(h1_{(2)}).\]

The images of these maps define separable algebras called the target and source algebras, respectively, denoted $H^t$ and $H^s$. As an algebra, $H^t$ is generated by the right tensor factors of the shortest possible presentation of $1_{(2)}$. $H^s$ is the same but uses left tensor factors of $\Delta(1)$. It has been shown that $\Rep(H)$ and $\coRep(H)$ are fusion categories when $H$ is a finite-dimensional semisimple weak Hopf algebra \cite{EGNO16}. Furthermore, it has been shown that all fusion categories are monoidally equivalent to the category of representations of a weak Hopf algebra \cite{Szl01}. The tensor product of (co)representation is taken over the target algebra $\otimes_{H^t}$ instead of $\otimes_{\K}$ for Hopf algebras.

\begin{example}[Direct sum of Group Algebra]
Most weak Hopf algebras are challenging to write down. Let $H\cong \K G_1\oplus \K G_2$. This is an example of a weak Hopf algebra that is not a Hopf algebra. The maps are defined as follows,

\[m(g_1+g_2,h_1+h_2)=g_1h_1+g_2h_2.\]
\[\Delta(g_1+g_2)=g_1\otimes g_1+g_2\otimes g_2.\]
\[\epsilon(g_1+g_2)=\epsilon(g_1)+\epsilon(g_2).\]
\[1=1_{g_1}+1_{g_2}.\]
\[S(g_1+g_2)=g^{-1}_1+g^{-1}_2.\]

This fails to be a Hopf algebra since the coproduct of the unit is ${1_{g_1}}\otimes 1_{g_1}+1_{g_2}\otimes 1_{g_2}$, which is not equal to $(1_{g_1}+1_{g_2})\otimes(1_{g_1}+1_{g_2})$. Thus, the comultiplication is not unital, and we conclude that $\K G_1\oplus \K G_2$ is a weak Hopf algebra and not a Hopf algebra.
\end{example}

Given a semisimple weak Hopf Algebra $(H,m,u,\Delta,\epsilon,S)$ we can define three other weak Hopf algebras

\[H^{op}=(H,m^{op},u,\Delta,\epsilon,S^{-1}).\]
\[H_{cop}=(H,m,u,\Delta^{op},\epsilon,S^{-1}).\]
\[H_{cop}^{op}=(H,m^{op},u,\Delta^{op},\epsilon,S).\]

$H_{cop}^{op}$ is canonically isomorphic to $H$.

\begin{define}
    An algebra $A$ is a left $H$ module algebra if,

    \begin{enumerate}
        \item $A$ is an $H$ module
        \item $h(ab)=h_{(1)}(a) h_{(2)}(b)$
        \item $h( 1_A)=\epsilon_t(h)(1_A)$
    \end{enumerate}
\end{define}

\begin{prop}
    Given an $H$ module algebra structure on $A$ then $1_{(1)}$ and $1_{(2)}$ act trivially on any $a \in A$.
\end{prop}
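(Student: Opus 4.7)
The statement concerns the action of the Sweedler components of $\Delta(1_H) = \sum 1_{(1)} \otimes 1_{(2)}$ on elements of $A$. The natural reading of ``act trivially on $a$'' is that the relevant Sweedler-summed expression recovers $a$ itself, so that the coproduct decomposition of the identity action of $1_H$ remains effectively the identity on $A$.

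The plan is to apply axiom (2) of the module algebra structure at $h = 1_H$. Since $1_H$ acts as the identity on the $H$-module $A$, for all $a, b \in A$ one obtains
\[
ab \;=\; 1_H(ab) \;=\; \sum 1_{(1)}(a) \cdot 1_{(2)}(b).
\]
Specializing $b = 1_A$ (respectively $a = 1_A$) yields
\[
a \;=\; \sum 1_{(1)}(a) \cdot 1_{(2)}(1_A) \;=\; \sum 1_{(1)}(1_A) \cdot 1_{(2)}(a),
\]
which is the most direct form of the triviality claim. Axiom (3) then allows one to rewrite $1_{(i)}(1_A) = \epsilon_t(1_{(i)})(1_A)$, placing the ``excess'' factors inside the image of the target subalgebra $H^t$. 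Combined with the standard weak Hopf algebra identity $\epsilon_t(1_{(2)}) = 1_{(2)}$ (equivalently, $\sum 1_{(1)} \otimes \epsilon_t(1_{(2)}) = \Delta(1)$), this shows that the residual action is absorbed into the separability idempotent structure of $\Delta(1)$ inside $H^s \otimes H^t$.

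The main obstacle is pinning down the precise meaning of ``trivially''. If the intended claim is the Sweedler-summed identity $\sum 1_{(1)}(a) = a$ (and its mirror for $1_{(2)}$), then one needs $\sum 1_{(1)} = 1_H = \sum 1_{(2)}$ as elements of $H$, which one would extract from $\Delta(1)$ being the separability idempotent of $H^t$ together with the counit axioms for the weak Hopf algebra. If instead the claim is the combined form displayed above, it is an immediate corollary of axiom (2) at $h = 1_H$. In either reading, the structural input driving the proof is the separability of $H^t$ inside $H$ witnessed by $\Delta(1)$, and in each case the argument closes by reducing the Sweedler action on $a$ back to the identity action of $1_H$.
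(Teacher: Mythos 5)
Your proposal is correct and follows essentially the same route as the paper: apply the module-algebra multiplicativity axiom at $h=1_H$ and specialize one factor to $1_A$, arriving at the combined identity $a=\sum 1_{(1)}(a)1_{(2)}(1_A)=\sum 1_{(1)}(1_A)1_{(2)}(a)$, which is exactly the sense of ``act trivially'' the paper intends. The paper's only additional step is to use axiom (3) and the counit axiom to establish $1_{(1)}(1_A)=1_A=1_{(2)}(1_A)$ first, which you correctly sketch via $\epsilon_t$ but do not need for the combined form.
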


\begin{proof}
    $1_H(1_A)=1_A$ but using the comultiplication $1_A=1_{(1)}(1_A)1_{(2)}(1_A)$. Notice that $1_{(1)}(1_A)$ is equal to $\epsilon_t(1_{(1)})1_A=\epsilon(1_{(1)})1_{(2)}(1_A)=1_H1_A$ thus $1_{(1)}(1_A)=1_A$ which implies that $1_{(2)}(1_A)=1_A$. Now for any $a\in A$ we have that $a=a\cdot 1$ and $1_H(a)=a$. Then applying the comultiplication property it follows that $1_H(a)=1_{(1)}(1_A)1_{(2)}(a)=1_{(1)}(a)1_{(2)}(1_A)=a$ which implies that $1_{(1)}$ and $1_{(2)}$ act trivially on any $a \in A$.
\end{proof}

$H$ module algebras are algebra objects internal to the category $\Rep(H)$. As mentioned earlier, the category of right modules over an internal algebra object is a left $\Rep(H)$ module category.

A relevant example of a weak Hopf algebra is the weak Hopf algebra associated with a $C$ module category. Given a fusion category $C$ and a left semisimple $C$ module category $M$, and a progenerator $m$, we can produce a weak Hopf algebra using this data. A natural choice of progenerator $m$ is the sum of all simple objects in $M$. When $M\cong C$, the corresponding weak Hopf algebra is the face algebra associated to a fusion category \cite{Hay96}. In \cite{BLV}, these weak Hopf algebra constructions are written down in terms of a picture basis that we will use later in the paper.

\section{Weak Hopf Algebra Actions and Based Actions}

This is the main section of the paper where we will establish a connection between fusion category actions and finite-dimensional semisimple weak Hopf algebra actions. This builds off the connection established in \cite{Bet25} between $\coRep(H)$ actions and $H$ actions for a semisimple Hopf algebra. An action of a fusion category $C$ on an algebra is defined as a $C$ module structure on $\Mod(A)$. By the Eilenberg-Watts Theorem \cite{Eil60,Wat}, this is equivalent to a monoidal functor $F:C\rightarrow \Bim(A)$. We want to study fusion category actions on algebras with some more structure, thus we define a based action of a fusion category on an algebra \cite{Bet25}. 

\begin{define}
    A based action of a fusion category $C$ on an algebra $A$ is a triple $(A,F,V_{-})$ where $(F,J_{-,-}):C\rightarrow \Bim(A)$ is a monoidal functor, and $V_X\subset F(X)$ is a finite dimensional subspace satifying that $\1_A\in V_1$, and morphisms preserve these subspaces.
\end{define}

There is a natural notion of equivalence of based actions that we will use in this paper, which is defined as follows.

\begin{define}
    Two based actions $(F,J,V)$ and $(G,H,U)$ on $A$ are equivalent if there is a monoidal natural isomorphism of based actions between them.
\end{define}

There are a few properties that we want to use for based actions; we will list them now. Refer to \cite{Bet25} for a comprehensive introduction to based actions.

\begin{lem}[\cite{Bet25} Lemma 3.9]
    Let $(F,J,V)$ be a based action of a fusion category $C$ on $A$. There is an induced lax monoidal functor $G:C\rightarrow \VecC_{f.d}$ where $G(X)\cong V_X$.
\end{lem}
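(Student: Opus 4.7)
The plan is to build $G$ by simply restricting the data of $(F,J,V)$ to the chosen subspaces $V_X$ and then verifying that all of the monoidal data of $F$ descend. Concretely, on objects I would set $G(X) := V_X$, which is finite dimensional by hypothesis. For a morphism $f : X \to Y$ in $C$, the bimodule map $F(f) : F(X) \to F(Y)$ satisfies $F(f)(V_X) \subset V_Y$ by the hypothesis that morphisms preserve the chosen subspaces, so I would define $G(f)$ to be the $\K$-linear map obtained from $F(f)$ by restricting its domain to $V_X$ and corestricting its codomain to $V_Y$. Functoriality of $G$ is then inherited on the nose from functoriality of $F$.

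For the lax monoidal structure, the unit map $\iota : \K \to G(\1) = V_\1$ is forced on us by the based-action axiom $1_A \in V_\1$: send $1 \in \K$ to $1_A$. For the constraint $\mu_{X,Y} : G(X) \otimes_\K G(Y) \to G(X\otimes Y)$, I would use the composition
\[
V_X \otimes_\K V_Y \;\longrightarrow\; F(X) \otimes_A F(Y) \;\xrightarrow{J_{X,Y}}\; F(X\otimes Y),
\]
where the first arrow is induced from the subspace inclusions and the quotient $\otimes_\K \twoheadrightarrow \otimes_A$. I would then argue that the image of this composite lies in $V_{X\otimes Y}$, so that the map corestricts to $\mu_{X,Y} : V_X \otimes_\K V_Y \to V_{X\otimes Y}$; this is the step that actually uses the compatibility between $J$ and the chosen subspaces built into the definition of based action in \cite{Bet25}, and it is the main obstacle of the proof. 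Naturality of $\mu_{-,-}$ is immediate from naturality of $J_{-,-}$ combined with the fact that $G$ on morphisms is literally a restriction of $F$ on morphisms.

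Finally, I would verify the two lax-monoidal coherence diagrams (associativity and unitality) for $(G,\mu,\iota)$. Each of these diagrams is a diagram of finite-dimensional vector spaces whose edges are restrictions of the corresponding morphisms in $\Bim(A)$ that appear in the monoidal coherence diagrams for $(F,J)$. Since those diagrams commute by the monoidality of $F$, their restrictions to the subspaces $V_{-}$ commute as well, and the unitality axiom similarly reduces to the fact that $J_{\1,X}$ and $J_{X,\1}$ restrict to the left and right unitors of $F(X)$, together with $1_A \in V_\1$. The only real content beyond bookkeeping is confirming that $J_{X,Y}$ sends pure tensors from $V_X \otimes_\K V_Y$ into $V_{X\otimes Y}$, and that is precisely the compatibility between the "basis data" $V_-$ and the tensorator $J_{-,-}$ implicit in the definition of a based action.
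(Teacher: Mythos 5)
Your proposal is correct and follows the natural route: the paper itself does not prove this lemma but imports it from \cite{Bet25}, and your argument (take $G(X)=V_X$, restrict $F$ on morphisms, obtain the lax structure from the inclusions composed with $J_{X,Y}$, and inherit coherence by restriction) is exactly the standard one the citation stands in for. You also correctly isolate the one point that the terse definition in this paper leaves implicit --- that $J_{X,Y}$ carries $V_X\otimes_\K V_Y$ into $V_{X\otimes Y}$ --- which is indeed part of the full definition of a based action in \cite{Bet25} and is used as such later in the paper (e.g.\ in the proof of Lemma \ref{lem:idem split FH}).
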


 It was shown in \cite{Bet25} that based actions fully generalize actions of a semisimple Hopf algebra. In this section, we will show that this result generalizes to weak Hopf algebra actions on algebras. We will first define some other important definitions about based actions.

  \begin{define}[\cite{Bet25}]
 Based actions of $C$ on $A$ over a functor $G$ are based actions such that the induced lax monoidal functor is monoidally naturally isomorphic to $G$. 
\end{define}

\begin{define}[\cite{Bet25}]
    A based action is separable if $V_1$ is a semisimple algebra in $\VecC_{f.d.}$.
\end{define}

Using this definition, we will now have the data necessary to equate weak Hopf algebra actions on $A$ and certain based actions of $\coRep(H)$ on $A$ for a semisimple weak Hopf algebra $H$. We will first show how a $H$ module algebra produces a monoidal functor $F:\coRep(H)\rightarrow \Bim(A)$. Notice that if there is a left $H$ action on $A$, it follows that there is a left $H^t$ action on $A$. We can define a right action of $H^t$ on $A$ using the antipode $S$ to state that $A$ is an $H^t$ bimodule.

\begin{lem}\label{lem:coRep becomes bim}
 Let $A$ be a left $\Rep(H)$ module algebra and let $(V,\rho)$ be a $H$-comdule, then $A\otimes_{H^t} V$ is an $A-A$ bimodule.
\end{lem}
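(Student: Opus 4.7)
The plan is to construct an $A$-bimodule structure on $A \otimes_{H^t} V$ and then verify its axioms. The preamble to the lemma equips $A$ with an $H^t$-bimodule structure: the left action comes from restricting the $H$-action, and the right action is obtained by composing the left $H^t$-action with the antipode $S$. For the comodule $V$ with coaction $\rho(v) = v_{(0)} \otimes v_{(1)}$, I would use the canonical left $H^t$-action $t \cdot v := v_{(0)} \epsilon(t v_{(1)})$, which is the structure implicit in the definition of the tensor product of comodules in $\coRep(H)$. With these conventions the balanced tensor product $A \otimes_{H^t} V$ is well-defined as a vector space.

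Next I would define the left and right $A$-actions by
\[
a' \cdot (a \otimes v) := a' a \otimes v, \qquad (a \otimes v) \cdot a' := a \cdot v_{(1)}(a') \otimes v_{(0)},
\]
where $v_{(1)}(a')$ denotes the left $H$-action of $v_{(1)} \in H$ on $a' \in A$. The left action descends to $A \otimes_{H^t} V$ automatically because it does not touch the second tensor factor. Well-definedness of the right action requires the identity $(a t \otimes v) \cdot a' = (a \otimes t v) \cdot a'$ in $A \otimes_{H^t} V$ for every $t \in H^t$. I would compute $\rho(t v)$ via coassociativity, obtaining $(t v)_{(0)} \otimes (t v)_{(1)} = v_{(0)} \otimes v_{(1)(1)} \epsilon(t v_{(1)(2)})$, and then match this against the action of $a t$ on $v_{(1)}(a')$ by invoking the defining relations of $\epsilon_t$ together with the weak Hopf axioms on $\Delta(1)$. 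Once well-definedness is established, associativity of the left action is immediate, associativity of the right action follows from the module algebra axiom $h(a_1 a_2) = h_{(1)}(a_1) h_{(2)}(a_2)$ combined with coassociativity of $\rho$, and the compatibility of the two actions is immediate from the definitions.

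The main technical obstacle, in my view, is the well-definedness of the right action over the balanced tensor product, since it is the only place where the interplay among the target subalgebra $H^t$, the $H$-action on $A$ twisted by $S$, and the coaction on $V$ becomes essential, and it requires careful bookkeeping with $\Delta(1)$ and $\epsilon$. Everything else proceeds in parallel with the Hopf algebra case already treated in \cite{Bet25}.
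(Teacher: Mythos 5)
Your construction coincides with the paper's: the left action is left multiplication in $A$, and the right action pushes $a'$ through the coaction leg of $V$ and lets the resulting element of $H$ act via the module-algebra structure, which is exactly the paper's formula $(a\otimes_{H^t} v)a' = a\,v_{(1)}(a')\otimes_{H^t} v_{(0)}$ up to comodule-notation conventions. You are in fact more thorough than the paper, whose proof records only the two formulas; your identification of well-definedness over the balanced tensor product $\otimes_{H^t}$ (via the antipode-twisted right $H^t$-action on $A$ and the counit-induced left $H^t$-action on $V$) as the genuine content of the verification is sound.
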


\begin{proof}
    Let $V$ be a corepresentation of $H$. Then we can define an $A$ bimodule structure on $A \otimes_{H^t} V$. The left action of $A$ on $A \otimes V$ is left multiplication in $A$. The right action of $A$ on $A\otimes_{H^t} V$ is done by applying the comodule action on $V$. That is $(a\otimes_{H^t}v)a':=ah_{(1)}(a')\otimes_{H^t}v_{(2)}$.
\end{proof}

\begin{lem}\label{lem:Hopf action gives based action}
    Let $A$ be a left $H$ module algebra. Then this action induces a monoidal functor $F:\coRep(H)\rightarrow \Bim(A)$ defined by $F((X,\rho))\cong A\otimes_{H^t} X$.
\end{lem}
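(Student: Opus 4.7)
The plan is to extend the bimodule construction from Lemma \ref{lem:coRep becomes bim} to a full monoidal functor by defining the action of $F$ on morphisms, constructing a tensorator, and verifying the monoidal coherences. First I would define $F$ on morphisms: a comodule map $f:(X,\rho_X)\to(Y,\rho_Y)$ induces $F(f):A\otimes_{H^t}X\to A\otimes_{H^t}Y$ by $a\otimes x\mapsto a\otimes f(x)$. Well-definedness over $H^t$ requires $f$ to be $H^t$-linear, which follows from $f$ being a comodule map since the $H^t$-module structure on any comodule comes from post-composing the coaction with $\epsilon_t$. Left $A$-linearity is immediate, and right $A$-linearity follows from $f$ commuting with the coactions. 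Functoriality is then obvious.

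Next I would construct the tensorator $J_{X,Y}:F(X)\otimes_A F(Y)\to F(X\otimes_{H^t}Y)$. Writing the coaction as $\rho_X(x)=x_{(0)}\otimes x_{(1)}$, the natural candidate is $(a\otimes x)\otimes_A(b\otimes y)\mapsto a\cdot x_{(1)}(b)\otimes(x_{(0)}\otimes y)$. I would verify it is well-defined over the $A$-balance (using the right action formula from Lemma \ref{lem:coRep becomes bim}) and over the $H^t$-balance on the target (using coassociativity and compatibility with $\epsilon_t$). Bimodule linearity is then routine. To see $J_{X,Y}$ is an isomorphism I would write down an explicit inverse $a\otimes(x\otimes y)\mapsto(a\otimes x)\otimes_A(1\otimes y)$ and verify that both compositions equal the identity by pushing the middle factor across the $A$-balance via the right action formula. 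Naturality in $X$ and $Y$ reduces to the definition of $F$ on morphisms.

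For the unit, the monoidal unit of $\coRep(H)$ is $H^t$ equipped with the coaction obtained from restricting the comultiplication, and $F(H^t)=A\otimes_{H^t}H^t\cong A$ via the $H^t$-action inherited from the $H$-action on $A$. The proposition stating that $1_{(1)}$ and $1_{(2)}$ act trivially is essential here: it ensures this map is well-defined on the balanced tensor product and compatible with the bimodule structure. Finally, I would verify the pentagon and unit triangle axioms. The pentagon unwinds to an equality of two maps $F(X)\otimes_A F(Y)\otimes_A F(Z)\to F(X\otimes_{H^t} Y\otimes_{H^t} Z)$ which coincide by coassociativity of the comodules together with the module-algebra property $h\cdot(ab)=h_{(1)}(a)h_{(2)}(b)$; the unit triangles reduce to the module-algebra axiom $h\cdot 1_A=\epsilon_t(h)(1_A)$.

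The main obstacle is not any single calculation but the bookkeeping required in juggling three balanced tensor products simultaneously: over $A$ (from the bimodule structure), over $H^t$ (from the coRep tensor product), and over $\K$. One has to check that $J_{X,Y}$ factors through all three balancings at once, and this is precisely where the weak Hopf axioms diverge most sharply from the Hopf case: the non-trivial idempotent $\Delta(1)$ forces repeated use of the target-algebra structure, whereas in the Hopf setting $H^t=\K$ and every balance collapses. Managing this without conflating the $H^t$-action on a comodule (coming from $\rho$) with the $H^t$-action on $A$ (coming from the module structure) is where the argument must be executed carefully.
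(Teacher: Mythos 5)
Your proposal is correct and follows essentially the same route as the paper: the paper also sets $F(f)=\id_A\otimes_{H^t}f$ and defines the tensorator as $\alpha_{A,X,Y}\circ(r_A\otimes\id_Y)\circ\alpha^{-1}_{A\otimes_{H^t}X,A,Y}$, which is exactly your element-wise map $(a\otimes x)\otimes_A(b\otimes y)\mapsto a\,x_{(1)}(b)\otimes(x_{(0)}\otimes y)$. Your write-up is in fact more detailed than the paper's (which leaves the coherence checks as "we manually check"), and your care about the three balanced tensor products is exactly the right concern.
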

\begin{proof}
    First, we shall show that $F$ is a functor. Given a morphism $f:X\rightarrow Y$ then $F(f)=\id_A\otimes_{H^t}f$. This is a left $A$ module morphism and will be a right $A$ module morphism because $f$ is a comodule homomorphism. $F$ respects the composition of morphisms and sends the identity to the identity, therefore $F$ is a functor.

    Now we will show that $F$ is monoidal. We need to construct a natural isomorphism $J_{X,Y}:F(X)\otimes_A F(Y) \rightarrow F(X\otimes Y)$.  Let, 
    
    \[J_{X,Y}:=\alpha_{A,X,Y}\circ(r_A\circ \id_Y)\circ \alpha^{-1}_{A\otimes_{H^t}X,A,Y}.\]

    $J_{-,-}$ is a natural isomorphism because of the naturality of each component. We manually check that this pair ($F,J)$ satisfies the necessary criterion to be a monoidal functor, and the result follows.
    \end{proof}

We see that the above lemma implies that $\coRep(H)$ actions generalize weak Hopf algebra actions of $H$ on $A$.  It was shown in \cite{Bet25} that for semisimple Hopf algebras, one could go back using the based actions over the canonical fiber functor. Weak Hopf algebras don't have a canonical fiber functor; instead, they have a weak fiber functor coming from the composite functor coming from the functor into bimodules composed with the forgetful functor from bimodules to vector spaces. We will denote this weak fiber functor $F_H$. Up to the equivalence of module categories, we can understand $F_H$ to factor through bimodules of a commutative separable algebra. In the case where $F_H$ factors through a semisimple commutative algebra, a pictorial representation of $F_H$ is as follows: first, we can write down the $C$ module category as the following picture diagram. This produces a monoidal functor from $C$ into $\Bim(\K^{\rank(M)})$. $X$ is an element in the fusion category and $m$ is a progenerator for the semisimple module category.

\begin{center}
\tikzset{every picture/.style={line width=0.75pt}} %set default line width to 0.75pt        

\tikzset{every picture/.style={line width=0.75pt}} %set default line width to 0.75pt        

\begin{tikzpicture}[x=0.75pt,y=0.75pt,yscale=-1,xscale=1]
%uncomment if require: \path (0,300); %set diagram left start at 0, and has height of 300

%Straight Lines [id:da605833755446225] 
\draw    (348,87) -- (347.5,160) ;
%Straight Lines [id:da36064646487713004] 
\draw    (347.75,123.5) -- (308.5,88) ;
%Straight Lines [id:da34263796528047097] 
\draw    (229,127) -- (301.5,127) ;
\draw [shift={(303.5,127)}, rotate = 180] [color={rgb, 255:red, 0; green, 0; blue, 0 }  ][line width=0.75]    (10.93,-3.29) .. controls (6.95,-1.4) and (3.31,-0.3) .. (0,0) .. controls (3.31,0.3) and (6.95,1.4) .. (10.93,3.29)   ;

% Text Node
\draw (341,164.4) node [anchor=north west][inner sep=0.75pt]    {$m$};
% Text Node
\draw (344,66.4) node [anchor=north west][inner sep=0.75pt]    {$m$};
% Text Node
\draw (300,65.4) node [anchor=north west][inner sep=0.75pt]    {$X$};
% Text Node
\draw (187,117.4) node [anchor=north west][inner sep=0.75pt]    {$X$};

\end{tikzpicture}

\end{center}

There is an algebra structure on $\End(m)=\K^{\rank(M)}$ and these pictures are left and right $\End(m)$ bimodules. The action on these bimodules is just by stacking morphisms in $\End(M)$ on the picture above. Then the weak forgetful functor $F_H$ is the monoidal functor where $V_1\cong\End(m)$ and $V_X$ are the bimodules described above, using the pictures composed with the forgetful functor to vector spaces. When $M\cong C$, we get the well-known face algebra associated to a fusion category \cite{Hay96,Hay99}. We formally define the weak fiber functor as follows,

\begin{define}
    Let $C\cong \coRep(H)$, then $F_H:\coRep(H)\rightarrow \VecC_{f.d}$ is the composite functor $F:C\rightarrow \Bim(V_1)$ composed with the forgetful functor $H:\Bim(V_1)\rightarrow \VecC_{f.d}$.
\end{define}

This forgetful functor is the reconstruction functor that, given a semisimple $C$ module category $M$, we can construct a weak Hopf algebra $H$ such that $C\cong \Rep(H)$ and $M\cong \Rep(H^t)$.

Now we state and prove some lemmas connecting based actions of $\coRep(H)$ on $A$ over $F_H$ and $H$ actions on $A$.

\begin{lem}
    Given a left action of a weak Hopf algebra $H$ on $A$, the induced monoidal functor $F:\coRep(H)\rightarrow \Bim(A)$ is a based action over $F_H$.
\end{lem}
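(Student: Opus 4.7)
The plan is to exhibit natural finite-dimensional subspaces $V_X \subseteq F(X)$ witnessing a based action, then verify that the resulting lax monoidal functor agrees with $F_H$. The natural candidate is
\[V_X := 1_A \otimes_{H^t} X \,\subseteq\, A \otimes_{H^t} X = F(X),\]
the image of the linear map $X \to F(X)$, $x \mapsto 1_A \otimes_{H^t} x$.

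First I will verify the based-action axioms. Finite-dimensionality is immediate, since each $V_X$ is a quotient of the finite-dimensional space $X$. For the unit, the monoidal unit of $\coRep(H)$ is $H^t$, and $F(\1) = A \otimes_{H^t} H^t$ is canonically isomorphic to $A$ as an $A$-bimodule; under this identification $V_1$ becomes the subalgebra $\{h(1_A) : h \in H^t\} \subseteq A$, which visibly contains $1_A$. For a comodule morphism $f : X \to Y$, the induced bimodule map $F(f) = \id_A \otimes_{H^t} f$ carries $1_A \otimes_{H^t} x$ to $1_A \otimes_{H^t} f(x) \in V_Y$, so morphisms preserve the chosen subspaces.

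The substantive step is to identify the induced lax monoidal functor $G : \coRep(H) \to \VecC_{f.d.}$, $X \mapsto V_X$, with the weak fiber functor $F_H$. I will construct a natural isomorphism $\eta_X : V_X \to F_H(X)$ via the identification of $1_A \otimes_{H^t} X$ with the underlying vector space of $X$ regarded as an $H^t$-bimodule through its coaction (using $\epsilon_s$ and $\epsilon_t$). On the unit this specializes to the algebra isomorphism $V_1 \cong \End(m) \cong H^t$ given by $h(1_A) \mapsto h$, which is well defined since $1_{(1)}$ and $1_{(2)}$ act trivially on $A$ by the preceding proposition; naturality in $X$ then follows from functoriality of the coaction.

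The main obstacle will be checking that $\eta$ is monoidal, namely that the laxator of $G$ obtained by restricting $J_{X,Y}$ agrees with the laxator of $F_H$ coming from the face-algebra-style multiplication. Unpacking the formula for $J_{X,Y}$ from Lemma \ref{lem:Hopf action gives based action}, this reduces to comparing $1_A \otimes_{H^t} (x \otimes_{H^t} y)$ with the image of $(1_A \otimes_{H^t} x) \otimes_{V_1} (1_A \otimes_{H^t} y)$ under $J_{X,Y}$ inside $F(X \otimes_{H^t} Y)$. The module algebra axiom $h(ab) = h_{(1)}(a)\, h_{(2)}(b)$, combined with coassociativity of the coaction and the triviality of $1_{(1)}, 1_{(2)}$ on $A$, will force these two expressions to coincide, yielding the desired monoidality and completing the proof.
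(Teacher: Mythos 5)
Your proposal is correct and follows essentially the same route as the paper, which simply asserts that the claim "follows immediately because of how morphisms and the associator are defined": you take the canonical base spaces $V_X = 1_A \otimes_{H^t} X$, check the based-action axioms, and identify the induced lax monoidal functor with $F_H$ via the $H^t$-bimodule structure on $X$. Your writeup supplies the details the paper treats as immediate (in particular the monoidality check for $\eta$, which you outline correctly even if the final computation is only sketched), so there is nothing to object to.
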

\begin{proof}
   This follows immediately because of how morphisms and the associator are defined.
\end{proof}

Now, since $H$ is a comodule algebra internal to $\coRep(H)$, there is a slight technicality here, as comodules are morphisms $\rho:M\rightarrow M\otimes H$, which is in contrast to the tensor product being over the target algebra in $\coRep(H)$. It was proven generally in \cite{BCM02} and written down in terms of weak bialgebras in \cite{WWW22} that the category of internal $H$ comodule algebras and the category of $H$ module coalgebras are isomorphic as categories, where the isomorphism is the forgetful functor. With this in mind, we understand enough to produce a lemma going back from special based actions of $C$ on $A$ to actions of a weak Hopf algebra on $A$.

\begin{lem} \label{lem:based action give Hopf action}
Let $H$ be a semisimple weak Hopf algebra. Then a based action of $\coRep(H)$ on $A$ over the weak fiber functor $F_H$ induces an $H$ module algebra structure on $A$.
\end{lem}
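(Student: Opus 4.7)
The plan is to reconstruct an $H$-action on $A$ from the based action data via a Tannakian-style construction, using the regular comodule $H\in\coRep(H)$ together with its counit $\epsilon\colon H\to\1$. First I unpack what the ``over $F_H$'' condition gives: the induced lax monoidal functor $G\colon X\mapsto V_X$ is monoidally isomorphic to $F_H$, so in particular $V_1\cong F_H(\1)\cong H^t$ as algebras, producing an inclusion $H^t\hookrightarrow A$ via $V_1\subset F(\1)=A$. This embedding supplies the $H^t$-bimodule structure on $A$ needed to make sense of an $H$-module algebra structure and pins down the boundary values that an $H$-action must satisfy at the unit.

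Next I construct the action explicitly. Consider $(H,\Delta)$ as an object of $\coRep(H)$. Applying $F$ yields an $A$-bimodule $F(H)$, and the identification $V_H\cong F_H(H)\cong H$ embeds $H$ as a subspace of $F(H)$. The counit $\epsilon\colon H\to\1$ is a morphism in $\coRep(H)$, so $F(\epsilon)\colon F(H)\to A$ is an $A$-bimodule map. I then define, for $h\in H$ and $a\in A$,
\[
h\cdot a\;:=\;F(\epsilon)\bigl(\hat h\triangleleft a\bigr),
\]
where $\hat h\in V_H\subset F(H)$ is the image of $h$ under the identification $H\cong V_H$ and $\triangleleft$ is the right $A$-action on $F(H)$. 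Equivalently, one may realize $h\cdot a$ via $F$ applied to the structure morphisms of the regular comodule together with the $A$-bimodule maps coming from the $\coRep(H)$-module category structure on $\Mod(A)$.

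Verification of the $H$-module algebra axioms proceeds by diagrammatic arguments. Associativity $(gh)\cdot a=g\cdot(h\cdot a)$ comes from naturality of $F$ applied to the multiplication $m\colon H\otimes_{H^t}H\to H$ combined with the monoidal coherence $J_{H,H}$. The unit $1_H\cdot a=a$ follows from unitality of $F$ and the normalization $1_A\in V_1$. The multiplicativity axiom $h(ab)=h_{(1)}(a)\,h_{(2)}(b)$ reflects the interplay of the right $A$-action on $F(H)$ with the coproduct $\Delta$ on the regular comodule, read off through $J_{H,H}$. Finally, the target axiom $h(1_A)=\epsilon_t(h)(1_A)$ follows from the identification $V_1\cong H^t\subset A$ and the compatibility of $\epsilon$ with $\epsilon_t$.

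The main obstacle will be the multiplicativity axiom, since it requires carefully tracking how $J_{H,H}$ interacts with both the $A$-bimodule structure on $F(H)$ and the coproduct $\Delta$ in the weak Hopf setting. Unlike the ordinary Hopf case in \cite{Bet25}, the unit $1_H$ is not grouplike and tensor products in $\coRep(H)$ are taken over $H^t$, so the lax structure maps of $F_H$ enter nontrivially and must be shown to assemble into the correct Sweedler-style identity. I expect the cleanest route uses the picture/diagrammatic calculus described earlier for the weak fiber functor $F_H$, or alternatively an appeal to Tannaka reconstruction of $H$ as $\coEnd(F_H)$ together with a check that the action defined above corresponds to the canonical one.
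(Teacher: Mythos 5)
Your proposal is correct and follows essentially the same route as the paper: both extract the action from the right $A$-module structure (the ``half braiding'') on $F(H)$ for the regular comodule $H$, restricted to the base space $V_H\cong H$ supplied by the ``over $F_H$'' condition, and both verify the module-algebra axioms from the monoidal coherence of $(F,J)$ together with coassociativity, multiplicativity of $\Delta$, and the target map $\epsilon_t$. Your post-composition with $F(\epsilon)$ to land in $A$ is only a cosmetic repackaging of the paper's direct reading-off of the Sweedler components $h\cdot a = h_{(1)}(a)\,h_{(2)}$ from the bimodule $A\otimes_{H^t}H$.
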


\begin{proof}
  Let $F:\coRep(H)\rightarrow \Bim(A)$ be a based action of $\coRep(H)$ over $F_H$ on $A$. This implies that $F_H\cong G:\coRep(H)\rightarrow \VecC$.  Let $h\in H$ and $a\in A$, by our half braiding $ha=\sum a_ih_i$.
  
  We can define our action of $H$ on $A$ by using comultiplication and the half braiding. In particular $ha=h_{(1)}(a)h_{(2)}$. Notice that since our base spaces span left/right projective basis, it follows that $ah=\sum_ik^ib^i=\sum_ik^i_{(1)}(b^i)k^i_{(2)}$ for some $h,k\in H$. It follows that multiplying by $\Delta(1)$ on these two terms doesn't change the product. This implies that our bimodule is of the form $A\otimes_{H^t}H$. We will show that this gives a left $H$ module algebra structure on $A$. We will first consider, 

  \[h ka=hk_{(1)}(a)k_{(2)})=(h_{(1)})(k_{(1)}(a))h_{(2)} k_{(2)}=(h_{(1)}k_{(1)})(a)h_{(2)}k_{(2)}\]

  \[h ka=(h k)_{(1)}(a)(h k)_{(2)}=(h_{(1)}k_{(1)})(a)h_{(2)} k_{(2)}.\]

  These are equal because our multiplication is a coalgebra homomorphism. Also, since $1_H=1_A$ it follows that $a1_H=a=1_Ha$, by our definition of our action, this implies the braiding is $1_{(1)}(a)1_{(2)}=a1_H$. Which implies that this is equivalent to $\Delta(1)(1_H(a)1_H)$, thus $1_H(a)=a$.
  
  We shall now show that $A$ has the $H$ module algebra structure. Consider,

  \[h(ab)=h_{(1)}(ab)h_{(2)}\]

  \[(ha)b=h_{(1)}(a)h_{(2)}b=h_{(1)}(a)h_{(21)}(b)h_{(22)}.\]

 By our bimodule half braiding, these are equal, and the comultiplication is coassociative. This implies the action is compatible with comultiplication in $H$ and multiplication in $A$.
 
 Finally, note that, 
 
 \[h1_A=1_Ah\]
 By the definition of a weak Hopf algebra, we have a special map $\epsilon_t$.
 \[\epsilon_t(h_{(1)})=\epsilon(1_{(1)}h_{(1)})1_{(2)}=\epsilon(h_{(1)})1_{(2)}.\]
 Which implies that,
$h1_A=h_{(1)}1_Ah_{(2)}$.  Consider $\epsilon_t(h_{(1)})(1_A)=\epsilon(h_{(1)})1_{(2)}(1_A)$. Then, since we have shown that the action is compatible with comultiplication, it follows that $1_{(2)}(1_A)=1_A$, which is a property of $H$ module algebras. Thus, $A$ has a left $H$ module algebra structure. 
\end{proof}

Thus, for a semisimple weak Hopf algebra $H$, $H$ action on $A$ can be understood in terms of $\coRep(H)$ actions over $F_{H}$ and vice versa. We will now explain what classifies a based action over $F_H$ up to monoidal natural isomorphism. We need to define a special type of algebra isomorphism between weak Hopf algebras.

\begin{define}
Given two semisimple weak Hopf algebras $H_1$ and $H_2$, a $J$-twisted algebra isomorphism is an isomorphism $\alpha_{H}:H_1\rightarrow H_2$ that is a component morphism in a monoidal natural automorphism of the functor $F:C\rightarrow \Bim(H^t)$ such that,

\[m \circ J^2_{H_2,H_2}\circ \alpha_{H_1}\otimes \alpha_{H_1}=\alpha_{H_1}\circ m\circ J^1_{H_1,H_1},\]
\end{define}

\begin{thm}\label{thm:WHA action mod cat}
    Let $H$ be a semisimple weak Hopf algebra. Equivalence classes of based actions of fusion categories of $\coRep(H)$ on $A$ over $F_H$ are in bijection with left $H$ actions on $A$ up to $J$-twisted isomorphism of $H$.
\end{thm}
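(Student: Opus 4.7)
\medskip

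The plan is to show that the two constructions already produced, namely Lemma \ref{lem:Hopf action gives based action} sending an $H$-action to a based action over $F_H$, and Lemma \ref{lem:based action give Hopf action} sending a based action over $F_H$ to an $H$-action, descend to mutually inverse bijections on equivalence classes and translate monoidal natural isomorphisms into exactly the $J$-twisted isomorphisms as defined above. In what follows I will think of the weak Hopf algebra $H$ as the regular comodule in $\coRep(H)$, which under $F_H$ is sent (up to the identification through the forgetful functor) to the underlying vector space of $H$ itself.

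First I would verify the set-level bijection. Starting from an $H$-module algebra $A$, the based action built in Lemma \ref{lem:Hopf action gives based action} has bimodules $A \otimes_{H^t} X$ whose right action $(a \otimes_{H^t} v) \cdot a' = a h_{(1)}(a') \otimes_{H^t} v_{(2)}$ is precisely the one from Lemma \ref{lem:coRep becomes bim}. Applying the reconstruction of Lemma \ref{lem:based action give Hopf action} to $X = H$ and reading off the half-braiding on $A \otimes_{H^t} H$ recovers the rule $h \cdot a = h_{(1)}(a) h_{(2)}$, which is exactly the original action. For the other direction, suppose we start with a based action $(F,J,V)$ over $F_H$ and extract an $H$-action via Lemma \ref{lem:based action give Hopf action}. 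Since all objects of $\coRep(H)$ appear as subobjects of direct sums of $H$, semisimplicity together with the fact that $V$ identifies $V_X$ with $F_H(X)$ forces the resulting functor $X \mapsto A \otimes_{H^t} X$ to be monoidally naturally isomorphic to $F$ as a based action; the identification of $V_1$ with $H^t$ imposed by $F_H$ guarantees compatibility of tensor products over $H^t$.

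Next I would handle the equivalence level. Given a monoidal natural isomorphism $\eta : (F_1,J_1,V) \to (F_2,J_2,V)$ of based actions over $F_H$, its component $\eta_H$ is an automorphism of the underlying vector space of $H$ obtained from restricting $\eta$ to the distinguished subspaces. Naturality with respect to morphisms in $\coRep(H)$ together with the fact that $V_1 \cong H^t$ shows that $\eta_H$ is compatible with the target algebra action. Hexagon compatibility with $J_1$ and $J_2$ then translates precisely into the equation
\[ \eta_H \circ m \circ J^1_{H,H} = m \circ J^2_{H,H} \circ (\eta_H \otimes \eta_H), \]
which is the defining relation of a $J$-twisted algebra isomorphism. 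Conversely, given a $J$-twisted isomorphism $\alpha$, the component at the regular corepresentation determines, by semisimple extension across $\coRep(H)$, a monoidal natural isomorphism of the two based actions, with coherence following from the compatibility of $\alpha$ with the $J$'s and with $H^t$.

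The step I expect to be the main obstacle is the coherent treatment of the tensor product over the target algebra $H^t$ together with the twisting by the monoidal constraints. The construction of $A \otimes_{H^t} X$ in Lemma \ref{lem:coRep becomes bim} depends on the identification $V_1 \cong H^t$ built into $F_H$, and the $J$'s measure exactly the failure of a naive algebra isomorphism of the two $H$'s to intertwine the two induced multiplications on $A \otimes_{H^t} H$. Making precise that monoidal natural isomorphisms of based actions over $F_H$ are in bijection with those isomorphisms of $H$ that intertwine the two multiplications after passing through the $J$'s, and not more, is exactly where the definition of $J$-twisted isomorphism is calibrated; once this matching is established, the bijection of equivalence classes is a direct consequence of the set-level bijection above.
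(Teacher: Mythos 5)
Your proposal is correct and follows essentially the same route as the paper: both directions rest on Lemmas \ref{lem:Hopf action gives based action} and \ref{lem:based action give Hopf action}, and the equivalence-level matching is obtained exactly as you describe, by restricting a monoidal natural isomorphism to the base spaces to get the relation $m \circ J^2_{H,H}\circ (\alpha_H\otimes \alpha_H)=\alpha_H\circ m\circ J^1_{H,H}$ at the component $H$, and conversely. The only difference is that you explicitly check that the two constructions are mutually inverse at the set level, a verification the paper leaves implicit.
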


\begin{proof}
Let's start with two monoidally naturally isomorphic based actions of  $\coRep(H)$ on $A$ over $F_H$ with monoidal natural isomorphism $\alpha$. That is, there are two monoidal functors $F_i:\coRep(H_i)\rightarrow \Bim(A)$ that are monoidally naturally isomorphic. This implies that there is a monoidal natural isomorphism that preserves the base spaces, i.e, $\alpha:V_X\rightarrow U_X$. This implies that $\alpha$ sends $H_1$ to $H_2$. Since multiplication is a comodule homomorphism and $\alpha$ is a monoidal natural isomorphism, it follows that,
\[m \circ J^2_{H_2,H_2}\circ \alpha_{H_1}\otimes \alpha_{H_1}=\alpha_{H_1}\circ m\circ J^1_{H_1,H_1}.\]
This implies that $\alpha$ is an $J$-twisted algebra isomorphism.

Conversely, if we consider to $J$-twisted algebra isomorphic to weak Hopf algebras $H_1$ and $H_2$, then this defines a natural isomorphism of $F:C\rightarrow \Bim(H^t)$ and itself. These two weak Hopf algebra actions produce based actions over $F_H$. Since these $F_H$ functors are naturally isomorphic, it follows that the corresponding based actions will be monoidally naturally isomorphic as based actions.
\end{proof}

Now we can define an equivalence category of based actions over $F_H$ and the corresponding category of weak Hopf algebra actions up to strong isomorphism. We define the following two categories,

\begin{define}
    Define the $\BasedAlg(C,F_H)$ category of based actions of $C$ over $F_H$ where objects are triples $(F,J,V)$ up to monoidal natural isomorphism and morphisms are left f.g projective $A-B$ based bimodules where the set is $B$ and the base space is $1_B$ and the isomorphism between $F(X)\otimes_A M\cong M\otimes_B G(X)$ restricts to the identity on the base spaces.
\end{define}

\begin{define}
    Define the category $\ModAlg(H)$ where objects are weak Hopf algebra actions of $H$ on $A$ up to $J$-twisted algebra isomorphism of $H$ and morphisms are algebra homomorphisms $f:A\rightarrow B$ that are also compatible with the $H$ action 
\end{define}

\begin{lem}
    There is an equivalence of categories between  $\BasedAlg(\coRep(H),F_H)$ and $\ModAlg(H)$
\end{lem}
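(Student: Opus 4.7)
The plan is to leverage Theorem \ref{thm:WHA action mod cat}, which already gives the object-level bijection between equivalence classes of based actions of $\coRep(H)$ on $A$ over $F_H$ and $H$-module algebra structures on $A$ up to $J$-twisted isomorphism. What remains is to upgrade this bijection to a full equivalence of categories by constructing functors in both directions, checking their behavior on morphisms, and verifying that the composites are naturally isomorphic to the respective identity functors.

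First I would define a functor $\Phi : \BasedAlg(\coRep(H), F_H) \to \ModAlg(H)$. On objects, $\Phi$ is the assignment produced by Lemma \ref{lem:based action give Hopf action}. A morphism in $\BasedAlg(\coRep(H), F_H)$ from $A$ to $B$ is a based $A$--$B$ bimodule whose underlying right module is $B$ with base space $1_B$; such data amounts to an algebra homomorphism $f : A \to B$ defined by $f(a) := a \cdot 1_B$. The requirement that the coherence isomorphism $F(X) \otimes_A B \cong B \otimes_B G(X)$ restrict to the identity on the base spaces forces $f(h \cdot a) = h \cdot f(a)$ for all $h \in H$ and $a \in A$, so $f$ is $H$-equivariant, and I set $\Phi(f) := f$.

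Conversely I would define $\Psi : \ModAlg(H) \to \BasedAlg(\coRep(H), F_H)$ by sending an $H$-module algebra $A$ to the based action of Lemma \ref{lem:Hopf action gives based action}, with base spaces $V_X$ identified with $1_A \otimes_{H^t} X \subset A \otimes_{H^t} X$. An $H$-equivariant algebra map $f : A \to B$ is sent to $B$ viewed as an $A$--$B$ bimodule with left action $a \cdot b := f(a)b$ and base space $1_B$; the required bimodule isomorphism $(A \otimes_{H^t} X) \otimes_A B \to B \otimes_B (B \otimes_{H^t} X)$ is the canonical one, $a \otimes_{H^t} x \otimes_A b \mapsto b \otimes_B (f(a)b \otimes_{H^t} x)$ up to reassociation, and the $H$-equivariance of $f$ guarantees that this restricts to the identity on base spaces. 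Functoriality of $\Phi$ and $\Psi$ (preservation of identities and composition) follows from the identifications $B \otimes_B C \cong C$ of based bimodules together with the transitivity of the half-braiding data. That $\Phi \circ \Psi$ and $\Psi \circ \Phi$ are naturally isomorphic to the identity follows at the object level from Theorem \ref{thm:WHA action mod cat}, while at the morphism level it is direct from the definitions since the passage $f \mapsto (a \mapsto a \cdot 1_B) = f$ is tautological.

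The main obstacle will be pinning down that the condition \emph{``the half-braiding isomorphism restricts to the identity on base spaces''} is not only a necessary consequence of $H$-equivariance of $f$ but is precisely equivalent to it. This relies on the fact that the base spaces span left projective generators of the bimodule, so that a single identity on $V_X$ determines the half-braiding globally via the $H^t$-action; coassociativity of the $H$-comodule structure then lets one transfer the base-space identity into the functional equation $f(h \cdot a) = h \cdot f(a)$. A secondary subtlety is that $V_\1 \cong H^t$ is a separable algebra rather than just $\K$, so the unit-object compatibility must reconcile the two copies of $H^t$ appearing inside $A$ and $B$ via the action on $1_A$ and $1_B$; this reduces to the requirement $f(1_A) = 1_B$, which is part of $f$ being a unital algebra homomorphism. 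Once these compatibilities are checked the equivalence follows.
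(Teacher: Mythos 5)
Your proposal is correct and follows essentially the same route as the paper: both use the object-level bijection of Theorem \ref{thm:WHA action mod cat} for essential surjectivity, identify a morphism (a based bimodule with underlying set $B$ and base space the unit) with an algebra homomorphism $f$, and then show that the condition that the half-braiding restrict to the identity on base spaces is equivalent to $H$-equivariance of $f$ by computing the right action on $V_X\otimes 1$ in two ways. The only cosmetic difference is that you package the argument as a pair of explicit quasi-inverse functors, whereas the paper exhibits one functor and checks it is fully faithful and essentially surjective.
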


\begin{proof}
    We already showed that since there is a bijection between these sets, our functor that takes a $J$-twisted algebra isomorphism class of weak Hopf algebra and assigns it a based action up to monoidal natural isomorphism. Thus, this map will be essentially surjective. Therefore, it suffices to show that this functor is bijective on homomorphism spaces. Consider $\Hom_H(B,A)$ the space of algebra homomorphisms that are also $H$ module homomorphisms. For each morphism, we can generate a based bimodule where the set is $B$ and the base space is $1_A$. The right action is action by $f(b)$ and the left $A$ action is acting by $a$. Notice that this creates a based bimodule $A-B$ $\leftidx_Af_B$ bimodule $A$ over $1_A$. This process is injective because the bimodule and the half braiding are entirely determined by $f$. Now consider a left dualizable $A-B$ based bimodule where the set is $A$ and the base space is $1_A$. This implies that if  $(F,J,V)$ and $(G,K,U)$ are based actions on $A$ and $B$ respectively such that $F(X)\otimes_A \leftidx_Af_B\cong G(X)\otimes_B \leftidx_Af_B$ and that this restricts to the based spaces $U_X$ and $V_X$. Thus, consider $1_A\otimes_B U_X$ and the action of $b$ on the right. The first half of the braiding yields
    \[(1_A\otimes_B h)b=1_Ah_{(1)}(b)\otimes_B h_{(2)}=f(h_{(1)}(b))1_A\otimes_B h_{(2)}.\]
     Conversely, multiplying on the right by b to $V_X\otimes_A 1_A$ yields

    \[(h\otimes_A 1_A)b=hf(x)\otimes_A 1_A)=h_{(1)}(f(b))h_{(2)}\otimes_A 1_A\]

    If these are indeed compatible, then the action of $H$ on $A$ and $B$ must be intertwined with the algebra morphism $f$. Thus giving us an $H$ module structure on $f$. Therefore, the map is surjective, and the result follows since there is a functor that is fully faithful and essentially surjective.
\end{proof}

With our understanding of weak Hopf algebra actions and $\coRep(H)$ actions over the weak fiber functor, notice that if an action exists, then there is an embedding of $H^t$ inside $A$. Since $H^t$ has non-trivial idempotents, it follows that $A$ must have non-trivial idempotents, leading us to the following lemma.

\begin{lem}
    If $A$ is an algebra and there are no non-trivial idempotents, then there is no action of a weak Hopf algebra on $A$.
\end{lem}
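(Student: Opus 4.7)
The strategy is to argue by contrapositive: assuming a weak Hopf algebra $H$ with $\dim H^t \geq 2$ acts on $A$, I produce a non-trivial idempotent in $A$. The mechanism, flagged in the paragraph preceding the statement, is that the $H$-module algebra structure restricts to an algebra embedding $\iota : H^t \hookrightarrow A$ given by $\iota(x) := x(1_A)$. Once this embedding is in hand, the separability of $H^t$ as a finite-dimensional $\K$-algebra (noted in the preliminaries) ensures that whenever $\dim H^t \geq 2$ it contains a non-trivial idempotent $e$, and $\iota(e) \in A$ is then the desired non-trivial idempotent. The only way to avoid this is $\dim H^t = 1$, in which case $H$ degenerates to a genuine Hopf algebra.

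To construct $\iota$ I would verify that it is a well-defined unital algebra homomorphism by combining the module-algebra axiom $h(ab) = h_{(1)}(a)\, h_{(2)}(b)$ with the structural identity $\Delta(x) \in H \otimes H^t$ for $x \in H^t$ and the unit identity $h(1_A) = \epsilon_t(h)(1_A)$. Unitality $\iota(1_{H^t}) = 1_A$ is immediate from the proposition established just before the main theorem, namely that $1_{(1)}$ and $1_{(2)}$ act trivially on $1_A$. Multiplicativity follows from expanding $(x(1_A))(y(1_A))$ via the coproduct of $x$ and collapsing the extra factor using the unit identity, yielding $(xy)(1_A)$. Injectivity is then a consequence of the identity $x(a) = \iota(x) \cdot a$ for $x \in H^t$ and $a \in A$ (proved by the same bookkeeping), so that $\iota(x) = 0$ forces $x$ to act as zero on all of $A$; since $H^t$ acts faithfully on $A$ through $\iota$ (with $1_A$ a cyclic vector), this forces $x = 0$.

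The main obstacle is the multiplicativity of $\iota$: in the weak setting the coproduct of $x \in H^t$ does not land in $H^t \otimes H^t$, so one cannot simply quote the corresponding Hopf-algebra computation. One must carefully invoke the one-sided identity $\Delta(x) \in H \otimes H^t$ together with the target-map axioms to force the apparently extra $H$-factor to collapse against $1_A$ via $\epsilon_t$. Once this calculation is carried through, the rest of the argument is immediate: separability of $H^t$ guarantees a non-trivial idempotent whenever $\dim H^t \geq 2$, and its image under $\iota$ provides the contradiction with the assumption that $A$ has only trivial idempotents.
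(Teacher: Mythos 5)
Your proposal is correct and follows essentially the same route as the paper: its one-sentence proof likewise rests on the observation, made in the paragraph preceding the lemma, that an $H$-module algebra structure embeds the separable algebra $H^t$ into $A$ via $x \mapsto x(1_A)$, so that a non-trivial idempotent of $H^t$ (which exists once $\dim H^t \geq 2$, i.e.\ once $H$ is genuinely weak) produces one in $A$. You supply far more detail than the paper does, including the multiplicativity computation and the explicit exclusion of the Hopf case $\dim H^t = 1$; the one soft spot, injectivity of $\iota$ (your appeal to faithfulness of the $H^t$-action is circular, since that faithfulness is equivalent to injectivity of $\iota$), is asserted without proof in the paper as well, so it is a shared assumption rather than a divergence.
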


\begin{proof}
    If $A$ has only $0$ and $1$ as idempotents, then it follows that $H^t$ is not a subalgebra of $A$ and has non-trivial idempotents; then so must $A$.
\end{proof}

\begin{example}
If $A$ is an integral domain then $x^2=x$ then $x^2-x=0$ which implies that $x(x-1)=0$. It follows that $x=0$ or $x=1$; thus, the only idempotents of a domain are $0$ and $1$, and therefore, there is no weak Hopf algebra action of $H$ on an integral domain by the lemma above.
\end{example}

\section{Weak Hopf Algebra Actions on Path Algebras}

In this section we want to construct actions of weak Hopf algebras on path algebras. As a reminder, a quiver $Q=(V,E,s,t)$ is comprised of a vertex set $V$, edge set $E$, a target map $t:E\rightarrow V$, and a source map $s:E\rightarrow V$. Given a quiver $Q$, we define the path algebra over that quiver $Q$, denoted $\KQ$.

\begin{define}[Path Algebra]
    Let $Q$ be a Quiver. Let $\K$ be a field. The Path Algebra $\KQ$ is a graded $\K$ algebra whose basis is the set of all paths of length $l\geq 0$ in $Q$. The product of paths is defined as
    $$e_{i_1}...e_{i_n}*e_{j_1}....e_{j_m}= \delta_{s(e_{i_n})t(e_{j_1})}e_{i_1}...e_{i_n}e_{j_1}....e_{j_m}$$
\end{define}

This is an associative algebra, and if the vertex set is finite, then $\KQ$ is unital. In addition, the vertices form a complete set of primitive orthogonal idempotents in the path algebra such that all other complete sets of primitive orthogonal idempotents are conjugate to the vertices. A path algebra is finite-dimensional if and only if it doesn't contain a cycle.

We begin by stating an important lemma about weak Hopf algebra actions and idempotent split actions of fusion categories on path algebras. This allows us to build families of actions of weak Hopf algebras on $\KQ$ from the fusion category actions created in \cite{Bet25}. An action of a weak Hopf algebra on $\KQ$ corresponds to a separable action of $\coRep(H)$ on $\KQ$ over $F_H$. It was shown in \cite{Bet25} that a separable subalgebra of $\KQ$ is isomorphic to $\K^n$ and is conjugate to when the projections $\KQ$ are sums of vertex projections. A separable idempotent split based action is when $V_1=\K^{|\KQ_0|}$, that is, when the $V_1$ space consists of the minimal orthogonal vertex projections of $\KQ$.

\begin{lem}\label{lem:idem split FH}
    Let $C$ be a fusion category, then every equivalence class of graded separable idempotent split based actions of $C\cong \coRep(H)$ up to conjugacy corresponds to a weak Hopf algebra action up to $J$-twisted algebra isomorphism.
\end{lem}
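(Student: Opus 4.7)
The strategy is to reduce to Theorem \ref{thm:WHA action mod cat} by recognizing every graded separable idempotent split based action as a based action over the weak fiber functor $F_H$ of a weak Hopf algebra $H$ that is itself reconstructed from the action, and then to match the residual gauge freedoms on the two sides.

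First, I would unpack the idempotent split hypothesis. By assumption $V_1=\K^{|\KQ_0|}$ and the distinguished basis of $V_1$ consists of the minimal orthogonal vertex idempotents of $\KQ$. Because morphisms preserve the base spaces, each $V_X$ inherits a canonical $V_1$-bimodule structure from the action of $\KQ$ on $F(X)$, so $F$ factors as $C\to\Bim(V_1)\to\Bim(\KQ)$ along the inclusion $V_1\hookrightarrow\KQ$. The induced lax monoidal functor $G:C\to\VecC_{f.d.}$ of \cite{Bet25}, $G(X)\cong V_X$, is then the composite of the first factor with the forgetful functor $\Bim(V_1)\to\VecC_{f.d.}$.

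Next, I would invoke the weak Hopf algebra reconstruction discussed just before Lemma \ref{lem:based action give Hopf action}: the monoidal functor $C\to\End(\Mod(V_1))$ coming from the first factor above determines a semisimple $C$-module category $M$ with progenerator $m$ satisfying $\End(m)\cong V_1$, and from this data one produces a weak Hopf algebra $H$ with $C\cong\coRep(H)$, $H^t\cong V_1$, and weak fiber functor $F_H$ obtained by the very same bimodule-plus-forgetful composition. This shows $G\cong F_H$ as lax monoidal functors, so the given based action is a based action over $F_H$; applying Theorem \ref{thm:WHA action mod cat} then converts its equivalence class into a left $H$-action on $\KQ$ up to $J$-twisted algebra isomorphism.

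Finally, I would match the gauge freedoms. The conjugacy relation on the based-action side amounts to replacing the vertex idempotent basis of $V_1$ by a conjugate complete set of primitive orthogonal idempotents of $\KQ$; since all such sets are conjugate, conjugation by an element of $V_1^{\times}$ (or the normalizer thereof in $\KQ$) induces a monoidal natural automorphism of the based action and correspondingly a $J$-twisted algebra automorphism of $H$, so the two equivalence relations are identified under the bijection of Theorem \ref{thm:WHA action mod cat}.

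The main obstacle will be the identification $G\cong F_H$: although both functors are visibly built from the same bimodule factorization, to conclude a monoidal natural isomorphism one must check that the lax structure of $G$ induced by the bimodule product over $\KQ$ agrees with the one for $F_H$ coming from the tensor product over $H^t=V_1$ in the face algebra picture. The picture basis language of \cite{BLV,Hay96} should make this compatibility a direct diagrammatic verification rather than an ad hoc computation.
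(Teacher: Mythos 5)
Your architecture is the same as the paper's: show that the induced functor $X\mapsto V_X$ defines a \emph{strong} monoidal functor into $\Bim(V_1)$, identify its composite with the forgetful functor as the weak fiber functor $F_H$ of the reconstructed weak Hopf algebra, and then apply Theorem \ref{thm:WHA action mod cat}. The problem is that the one step you defer to ``a direct diagrammatic verification'' is where essentially all of the content of the paper's proof sits, and your plan never indicates where the hypothesis \emph{graded} is used, even though the key identification need not hold without it. Concretely, the based-action structure only hands you a composite $V_X\otimes_{V_1}V_Y\twoheadrightarrow V_X\otimes_{\KQ}V_Y\cong V_{X\otimes Y}$, where the first arrow is the quotient by the extra $\KQ$-balancing relations; for a general based action this quotient can be proper, so the induced functor $C\to\Bim(V_1)$ is a priori only lax monoidal, which is not enough to invoke reconstruction or Theorem \ref{thm:WHA action mod cat}. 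Your second paragraph simply asserts the factorization $C\to\Bim(V_1)\to\Bim(\KQ)$ of monoidal functors, which presupposes exactly this point.

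The paper closes the gap as follows: any balancing relation over an element of $\KQ$ of positive degree, say $x\cdot\alpha\otimes y\sim x\otimes\alpha\cdot y$ with $\alpha$ a path of length at least one, involves the vectors $x\cdot\alpha$ and $\alpha\cdot y$, which lie outside the base spaces because gradedness forces $V_X$ and $V_Y$ to sit in degree zero and hence to contain no edges; so no identifications beyond those coming from the degree-zero part of $\KQ$ are imposed on $V_X\otimes V_Y$. Idempotent splitness then says that the relevant degree-zero subalgebra is exactly $V_1=\K^{|\KQ_0|}$, giving $V_X\otimes_{\KQ}V_Y\cong V_X\otimes_{V_1}V_Y$ and upgrading the lax structure to a strong one. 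You should supply this argument explicitly, flagging where gradedness and idempotent splitness each enter, rather than outsourcing it. The remainder of your plan, including the matching of conjugacy against $J$-twisted isomorphism, is reasonable and in fact spells out bookkeeping that the paper's own proof leaves implicit.
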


\begin{proof}
   Given a graded idempotent split based action of $C$ on $\KQ$ corresponds to a monoidal functor $F:C\rightarrow \Bim(\KQ)$ such that $V_1$ is the canonical set of vertex projections. Since this is a based action, it follows that $J_{X,Y}:F(X)\otimes_{\KQ} F(Y)$ is a natural isomorphism of based bimodules, implying that the image of $V_X\otimes V_Y$ under the $\KQ$ balanced map is $V_{X\otimes Y}$. That is $V_X\otimes_{\KQ} V_Y$ is linearly isomorphic to $V_{X\otimes Y}$. Now, to be a weak Hopf algebra action, there is a monoidal functor from $C$ into $\Bim(V_1)$. For a based action, this functor is generally lax monoidal. Thus, it suffices to show that $V_X\otimes_{V_1} V_Y$ is isomorphic to $V_X\otimes_{\KQ} V_Y$ as a vector space, therefore making the lax monoidal functor a monoidal functor from $C$ into $\Bim(V_1)$. Consider $J_{X,Y}:F(X)\otimes_{\KQ} F(Y)\rightarrow F(X\otimes Y)$. This is induced by a $\KQ$ balanced map from $F(X)\otimes F(Y)\rightarrow F(X\otimes Y)$. Consider $\leftidx_a x_b\leftidx_b \alpha_c\otimes \leftidx_cy_b$. Since $V_X$ is idempotent split, then it follows that we can braid the path $\alpha$ to get $\sum_v \leftidx_a \beta_v\leftidx_v x_c\otimes \leftidx_cy_b.$ Now since these $V_X$ subspaces cannot contain any edges it follows that this object is not in $V_{X}\otimes_{\KQ}V_Y$ and thus this implies that the relative tensor product is actually understood over $V_1$. This is the same when the edge is multiplied on the $V_Y$ implying that the relative tensor product of base spaces over $\KQ$ is actually the relative tensor product over $V_1$. Thus, concluding that $V_X\otimes_{\KQ}V_Y\cong V_X\otimes_{V_1} V_Y$. Therefore, we can view the image $V_X\otimes_{\KQ}V_Y$ as being over $V_1=\K^{|\KQ_0|}$. This implies that for a separable idempotent split based action, the lax monoidal functor $F:C\rightarrow \Bim(V_1)$ is actually monoidal. Therefore, the lax monoidal functor for a graded idempotent split based action of $\coRep(H)$ on $\KQ$ is, in fact, over $F_H$ the weak fiber functor, thus, it is actually an action of $H$ on $\KQ$
\end{proof}

This lemma allows us to port all of our theory about graded separable idempotent split based action of fusion categories and translate them into actions of weak Hopf algebras on fusion categories.

Now that we have this connection between the category of weak Hopf algebra actions and fusion category actions over $F_H$ we can use the results in \cite{Bet25} to produce a classification of families of graded actions of the weak Hopf algebras on path algebras and then for the weak Hopf algebras corresponding to $\PSU(2)_{p-2}$ we can understand all separable actions in terms of weak Hopf algebra data in some context. 

\begin{lem}
    Let $H$ have a filtration preserving weak Hopf algebra action on $\KQ$, then that action is a grading preserving action of $H$ on $\KQ$.
\end{lem}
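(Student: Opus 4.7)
The plan is to show that the filtered $H$-action is $J$-twisted algebra isomorphic, in the sense of Theorem~\ref{thm:WHA action mod cat}, to the grading preserving action induced on the associated graded algebra. Exhibiting such an equivalence is sufficient because weak Hopf algebra actions on a fixed algebra are classified up to this relation in $\ModAlg(H)$.

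First, the filtration hypothesis says each $F_n := \bigoplus_{k\leq n}\KQ_k$ is an $H$-submodule of $\KQ$, so the action descends to an $H$-action on $\operatorname{gr}(\KQ) = \bigoplus_n F_n/F_{n-1}$. Since $\KQ$ is already graded, $\operatorname{gr}(\KQ)$ is canonically identified with $\KQ$ as a graded algebra, and the induced action is grading preserving by construction; on $F_0 = \KQ_0 = V_1$ it agrees with the original action, consistent with the embedding $H^t \hookrightarrow \KQ_0$ supplied by Lemma~\ref{lem:based action give Hopf action}.

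To promote this into an $H$-equivariant algebra automorphism $\phi: \KQ \to \KQ$ realizing the identification, I would work inside $\coRep(H)$ using the based action of Lemma~\ref{lem:Hopf action gives based action}. The short exact sequence of $\KQ_0$-bimodules
\[ 0 \to F_0 \to F_1 \to \KQ_1 \to 0 \]
lives in $\coRep(H)$ and splits there because $\coRep(H)$ is semisimple; this yields an $H$-equivariant $\KQ_0$-sub-bimodule $E \subset F_1$ mapping isomorphically onto $\KQ_1$. Since $\KQ$ is canonically the tensor algebra $T_{\KQ_0}(\KQ_1)$, the identity on $\KQ_0$ together with the embedding $\KQ_1 \cong E \hookrightarrow \KQ$ extends uniquely to an algebra endomorphism $\phi: \KQ \to \KQ$; it is an automorphism because its induced map on $\operatorname{gr}(\KQ)$ is the identity. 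By construction $\phi$ is $H$-equivariant on the generating set $\KQ_0 \cup \KQ_1$, and the module algebra axiom $h(ab) = h_{(1)}(a) h_{(2)}(b)$ propagates equivariance to all of $\KQ$, so $\phi$ intertwines the original filtered action with the grading preserving action on $\operatorname{gr}(\KQ) = \KQ$.

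The main technical obstacle is justifying the equivariant splitting of $F_1$ as $\KQ_0$-bimodules within $\coRep(H)$, rather than merely as $H$-modules. This uses the correspondence between based actions and weak Hopf algebra actions from Lemma~\ref{lem:based action give Hopf action} to view $\KQ$ and its filtration pieces as objects of $\Bim(\KQ_0)$ internal to $\coRep(H)$; semisimplicity of $\coRep(H)$ then furnishes the needed splitting, and the separability of $V_1 = \KQ_0$ ensures that the tensor-algebra universal property applies cleanly so that the bimodule splitting extends to a bona fide algebra automorphism.
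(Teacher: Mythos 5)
Your proposal is correct and follows essentially the same route as the paper: both arguments hinge on splitting $F_1=\KQ_0\oplus\KQ_1$ as a $\KQ_0$-bimodule internal to the representation category, using semisimplicity of that internal bimodule category (separability of $\KQ_0$). You go one step further than the paper's terse sketch by explicitly transporting the invariant complement $E$ back onto $\KQ_1$ via a filtered automorphism of $T_{\KQ_0}(\KQ_1)$ with identity associated graded --- a step the paper leaves implicit when it passes from ``$\KQ_1$ is a bimodule'' to ``the action is graded,'' and which is genuinely needed since the invariant complement produced by the splitting need not literally coincide with $\KQ_1$.
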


\begin{proof}
    The idea of this proof follows from \cite[Proposition 3.19]{EKW21}. If there is a filtered action of $H$ on $\KQ$, then there is $\KQ_0$ that has the structure of an $H$ module algebra. This implies that $\KQ_0\oplus\KQ_1$ has the structure of a $\KQ_0$ bimodule internal to $\Rep(H)$. But since the dual category of a semisimple module category is multifusion, it follows that $\KQ_1$ must also be a bimodule over $\KQ_0,$ implying that the action is graded
\end{proof}

Now we can use our understanding of graded fusion category actions to classify families of filtered actions of weak Hopf algebra actions on path algebras up to conjugacy. As shown in \cite{Bet25}, all separable idempotent split based actions are conjugate to separable idempotent split based actions where the base spaces $V_1$ are the canonical vertex projections of the path algebra. Thus, we produce the following theorem about weak Hopf algebra actions that are strongly isomorphic.

\begin{thm}
    There is a family of graded actions of semisimple weak Hopf algebras $H$ up to $J-$ twisted algebra isomorphism on path algebras classified by the following data,
     \begin{enumerate}
        \item A semisimple $\coRep(H)$ module category structure on $M$ up to invertible $\coRep(H)$ module functors of the form $(\Id,\rho)$.
        \item A conjugacy class of objects $[(Q,\phi_{-})]$ in $\End_{\coRep(H)}(M)$ up to conjugation by 
 a $\coRep(H)$ module endofunctors of the form $(\Id,\rho)$. 
   \end{enumerate} 
\end{thm}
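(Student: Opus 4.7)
The plan is to assemble the theorem as the composite of three results already in hand. First, Lemma~\ref{lem:idem split FH} identifies graded weak Hopf algebra actions of $H$ on $\KQ$ with graded separable idempotent split based actions of $\coRep(H)$ on $\KQ$ over the weak fiber functor $F_H$; combined with Theorem~\ref{thm:WHA action mod cat}, this says that $J$-twisted algebra isomorphism classes of graded $H$-actions on $\KQ$ are in bijection with monoidal natural isomorphism classes of such based actions. The first step of the proof is therefore to reduce, via these two results, the classification problem to the categorical problem: classify graded separable idempotent split based actions of $\coRep(H)$ on $\KQ$ over $F_H$ up to monoidal natural isomorphism of based actions.

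The second step is to invoke the classification of graded separable idempotent split based actions of a fusion category $C$ on $\KQ$ from \cite{Bet25}, specialized to $C = \coRep(H)$. That classification provides exactly the two pieces of data appearing in the theorem statement: a semisimple $C$-module category $M$ whose rank equals $|\KQ_0|$ (this encodes how $V_1 \cong \K^{|\KQ_0|}$ becomes a $C$-equivariant vertex algebra), together with an object $(Q,\phi_{-}) \in \End_C(M)$, which by the Eilenberg–Watts-style identification $\End_C(M) \cong \Bim(V_1)^{\op}$ encodes the degree-one part $\KQ_1$ as a $C$-equivariant $V_1$-bimodule. Because every filtration-preserving action is graded (by the preceding lemma) and $\KQ$ is generated in degrees $0$ and $1$, this data fully determines the monoidal functor $F:\coRep(H)\to \Bim(\KQ)$.

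The third step, which is the main technical point, is to check that the two equivalence relations match up. On the weak Hopf algebra side we work modulo $J$-twisted algebra isomorphism, equivalently modulo monoidal natural isomorphisms of based actions with identity underlying functor that preserve $V_1$. On the categorical side, a monoidal natural isomorphism with identity on $F$ that fixes $V_1$ pointwise restricts to: (a) a $\coRep(H)$-module autoequivalence of $M$ of the form $(\Id, \rho)$, where $\rho$ is the associator component, accounting for item (1); and (b) conjugation of the endofunctor $(Q,\phi_{-})$ by such an $(\Id, \rho)$, accounting for item (2). One must verify that no extra freedom is introduced or lost in translation — in particular that the $\rho$'s on either side correspond precisely and that conjugation on $\End_C(M)$ sees exactly those automorphisms coming from based natural isomorphisms. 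This matching of gauge data is where I expect the bookkeeping to be most delicate.

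The main obstacle is this last verification: to exhibit a clean bijection between the monoidal natural isomorphisms of based actions that preserve $V_1 = \K^{|\KQ_0|}$ and the pairs (module functor $(\Id,\rho)$, conjugation by such a functor on $\End_C(M)$), and to confirm that restricting to identity on $F$ (what is allowed for based isomorphisms of the path algebra) is exactly the restriction imposed by the form $(\Id,\rho)$. Once this is done the theorem follows by bolting (i), (ii), and the classification from \cite{Bet25} together along this dictionary.
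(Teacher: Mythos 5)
Your proposal is correct and follows essentially the same route as the paper: the paper's proof is a two-line reduction that cites the classification of graded separable idempotent split based actions on $\KQ$ from \cite{Bet25} and then invokes Lemma~\ref{lem:idem split FH} to identify these with weak Hopf algebra actions. Your additional explicit use of Theorem~\ref{thm:WHA action mod cat} to match the $J$-twisted isomorphism relation with monoidal natural isomorphism of based actions is a detail the paper leaves implicit, but it is the same argument.
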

\begin{proof}
    This proof follows from the classification of filtered fusion categories on path algebras \cite[Lemma 5.14]{Bet25}, noting that by Lemma \ref{lem:idem split FH} these actions are weak Hopf algebra actions.
\end{proof}

We currently know very little about other non-filtered weak Hopf algebra actions on path algebras outside of weak Hopf algebras coming from $\PSU(2)_{p-2}$. This fusion category is has the property that its only indecomposable semisimple module category is itself. $\VecC(\Z_p)_\omega$ also has this property , but most fusion categories have non-trivial semisimple module categories. In general, one would need to understand something about separable based actions of fusion categories on path algebras for a specific fusion category to classify the corresponding weak Hopf algebra actions.

We will explicitly write down the weak Hopf algebra action on the path algebra coming from our results on fusion category actions of $\coRep(H)$ on $\KQ$. The weak Hopf Algebra structure from $\coRep(H)$ acting on $M$ can be computed by studying the following vector spaces $M(a,x\triangleright c)\otimes M(x\triangleright b,d)$. These trivalent vector spaces can be represented by the following pictures. This picture, the basis of a weak Hopf algebra, was written down in \cite{BLV} and has been long known to experts.

\[\WHA{a}{b}{c}{d}{f}{e}{x} \strut\smash{\raise 1.75cm\hbox{$\hspace{1mm}a,b,c,d \in M,\hspace{1mm} x\in C,f \in M(x\triangleright b,d), e\in M(a,x\triangleright c) $}}\]

Given a graded separable idempotent split based action of $C$ on $\KQ$, we should be able to use our theory to produce a weak Hopf algebra action of the corresponding weak Hopf algebra on $\KQ$.

Let $M$ be a semisimple module category over $C$. Given a $C$-module functor $Q:M \rightarrow M$, we can realize a quiver $Q$ where the vertices are the simple objects and the edges from vertices $X$ to $Y$ are $\Hom(X,Q(Y))$. Building off we can find all paths of length $i$ from $X$ to $Y$ by computing $\dim  (\Hom(X,Q^i(Y)))$.
We can then define the Path Algebra $\KQ$ as 

\[\bigoplus_n \bigoplus_{a,b \in \Irr(C)} \Hom(a,Q^n(b))\]
Pictorially, we can visualize the path algebra as the following picture basis,

\begin{center}
\tikzset{every picture/.style={line width=0.75pt}} %set default line width to 0.75pt        

\begin{tikzpicture}[x=0.75pt,y=0.75pt,yscale=-1,xscale=1]
%uncomment if require: \path (0,300); %set diagram left start at 0, and has height of 300

%Straight Lines [id:da605833755446225] 
\draw    (310,83) -- (309.5,156) ;
%Straight Lines [id:da36064646487713004] 
\draw    (309.75,119.5) -- (349.5,84) ;

% Text Node
\draw (303,160.4) node [anchor=north west][inner sep=0.75pt]    {$a$};
% Text Node
\draw (306,62.4) node [anchor=north west][inner sep=0.75pt]    {$b$};
% Text Node
\draw (350,62.4) node [anchor=north west][inner sep=0.75pt]    {$Q^{n}$};
% Text Node
\draw (252,105.4) node [anchor=north west][inner sep=0.75pt]    {$\bigoplus _{a,b,n}$};
\end{tikzpicture}

\end{center}

Now, to translate our separable idempotent split graded based action of $C$ into our action of a weak Hopf algebra, as shown earlier, given an $h\in H$ and an $\alpha \in \KQ$, we define our action of $h_{(1)}$ on $a$ by using the half braiding of our bimodule. 

The action is as follows,

\begin{center}

\tikzset{every picture/.style={line width=0.75pt}} %set default line width to 0.75pt        

\begin{tikzpicture}[x=0.75pt,y=0.75pt,yscale=-1,xscale=1]
%uncomment if require: \path (0,300); %set diagram left start at 0, and has height of 300

%Straight Lines [id:da605833755446225] 
\draw    (98,88) -- (98,161) ;
%Straight Lines [id:da36064646487713004] 
\draw    (98,124.5) -- (58.5,89) ;
%Straight Lines [id:da24531520173011745] 
\draw    (143,88) -- (142.5,161) ;
%Straight Lines [id:da8085926354111159] 
\draw    (143,124.5) -- (182.5,89) ;
%Straight Lines [id:da7123264709239618] 
\draw    (200,129) -- (282.5,128.02) ;
\draw [shift={(284.5,128)}, rotate = 179.32] [color={rgb, 255:red, 0; green, 0; blue, 0 }  ][line width=0.75]    (10.93,-3.29) .. controls (6.95,-1.4) and (3.31,-0.3) .. (0,0) .. controls (3.31,0.3) and (6.95,1.4) .. (10.93,3.29)   ;
%Straight Lines [id:da8488007339583001] 
\draw    (331,92) -- (331,165) ;
%Straight Lines [id:da06922310397616782] 
\draw    (331,128.5) -- (371,91) ;
%Straight Lines [id:da011383899448058177] 
\draw    (455,89) -- (455,162) ;
%Straight Lines [id:da22437678781202797] 
\draw    (455,125.5) -- (415.5,90) ;

% Text Node
\draw (94,164) node [anchor=north west][inner sep=0.75pt]    {$b$};
% Text Node
\draw (94,66.4) node [anchor=north west][inner sep=0.75pt]    {$c$};
% Text Node
\draw (50,66.4) node [anchor=north west][inner sep=0.75pt]    {$X$};
% Text Node
\draw (117,120) node [anchor=north west][inner sep=0.75pt]    {$\cdot $};
% Text Node
\draw (139,164) node [anchor=north west][inner sep=0.75pt]    {$a$};
% Text Node
\draw (139,66.4) node [anchor=north west][inner sep=0.75pt]    {$b$};
% Text Node
\draw (175,66.4) node [anchor=north west][inner sep=0.75pt]    {$ \begin{array}{l}
Q\\
\end{array}$};
% Text Node
\draw (200,100) node [anchor=north west][inner sep=0.75pt]    {$ {\sum_{f,g,d}
}(F_{\alpha ,\beta ,b}^{f,g,d})$};
% Text Node
\draw (389,120) node [anchor=north west][inner sep=0.75pt]    {$\cdot $};
% Text Node
\draw (324,166) node [anchor=north west][inner sep=0.75pt]    {$d$};
% Text Node
\draw (326,75) node [anchor=north west][inner sep=0.75pt]    {$c$};
% Text Node
\draw (360,66.4) node [anchor=north west][inner sep=0.75pt]    {$ \begin{array}{l}
Q\\
\end{array}$};
% Text Node
\draw (448,164) node [anchor=north west][inner sep=0.75pt]    {$a$};
% Text Node
\draw (451,68.4) node [anchor=north west][inner sep=0.75pt]    {$d$};
% Text Node
\draw (406,70) node [anchor=north west][inner sep=0.75pt]    {$X$};
% Text Node
\draw (129,118.4) node [anchor=north west][inner sep=0.75pt]    {$\alpha $};
% Text Node
\draw (99,118.4) node [anchor=north west][inner sep=0.75pt]    {$\beta $};
% Text Node
\draw (317,118.4) node [anchor=north west][inner sep=0.75pt]    {$f$};
% Text Node
\draw (461,118.4) node [anchor=north west][inner sep=0.75pt]    {$g$};

\end{tikzpicture}
\end{center}

This defines the half braiding of each $\KQ$ bimodule coming from our idempotent split based action of $C$ on $\KQ$. Then we use Lemma \ref{lem:based action give Hopf action} to produce a weak Hopf algebra action on $\KQ$. The action is defined as follows,

\[\strut\smash{\raise 1.78cm\hbox{$\hspace{1mm}\Delta: \hspace{1mm}$}}\WHA{a}{b}{c}{d}{f}{e}{x}\strut\smash{\raise 1.78cm\hbox{$\hspace{1mm}\rightarrow\hspace{1mm}\frac{1}{\sqrt{d_X}}\sum_{m,n,h}$}}\WHA{n}{b}{m}{d}{f}{h}{x} \strut\smash{\raise 1.78cm\hbox{$\hspace{1mm}\otimes \hspace{1mm}$}}\WHA{a}{n}{c}{m}{h}{e}{x}
\] 

Then we define the action of $h_{(1)}$ to be coinside with the half braiding defined above, by Lemma \ref{lem:based action give Hopf action}, this is our weak Hopf algebra action.

\begin{example}
For a more concrete example, let $C\cong \VecC(G)_{\omega}$ for some nontrivial three cocycle $\omega$. Then, since this fusion category does not admit a fiber functor, it is equivalent to $\coRep(H)$ for some weak Hopf algebra $H$. We want to construct the explicit graded action of the face algebra on $\KQ$. That is when our module category is $\VecC(G)_{\omega}$. Let's obeserve when $X$ and $Q$ are simple objects $\delta_a$ and $\delta_b$. Here, the morphisms will be suppressed since we can choose our morphism basis to be just the identity morphism.

Given, \begin{center}

\tikzset{every picture/.style={line width=0.75pt}} %set default line width to 0.75pt        

\begin{tikzpicture}[x=0.75pt,y=0.75pt,yscale=-1,xscale=1]
%uncomment if require: \path (0,300); %set diagram left start at 0, and has height of 300

%Straight Lines [id:da605833755446225] 
\draw    (98,88) -- (98,161) ;
%Straight Lines [id:da36064646487713004] 
\draw    (98,124.5) -- (58.5,89) ;
%Straight Lines [id:da24531520173011745] 
\draw    (143,88) -- (142.5,161) ;
%Straight Lines [id:da8085926354111159] 
\draw    (143,124.5) -- (182.5,89) ;
%Straight Lines [id:da7123264709239618] 
\draw    (200,129) -- (282.5,128.02) ;
\draw [shift={(284.5,128)}, rotate = 179.32] [color={rgb, 255:red, 0; green, 0; blue, 0 }  ][line width=0.75]    (10.93,-3.29) .. controls (6.95,-1.4) and (3.31,-0.3) .. (0,0) .. controls (3.31,0.3) and (6.95,1.4) .. (10.93,3.29)   ;
%Straight Lines [id:da8488007339583001] 
\draw    (331,92) -- (331,165) ;
%Straight Lines [id:da06922310397616782] 
\draw    (331,128.5) -- (371,91) ;
%Straight Lines [id:da011383899448058177] 
\draw    (455,89) -- (455,162) ;
%Straight Lines [id:da22437678781202797] 
\draw    (455,125.5) -- (415.5,90) ;

% Text Node
\draw (94,164) node [anchor=north west][inner sep=0.75pt]    {$\delta_{kb^{-1}}$};
% Text Node
\draw (94,66.4) node [anchor=north west][inner sep=0.75pt]    {$\delta_h$};
% Text Node
\draw (50,66.4) node [anchor=north west][inner sep=0.75pt]    {$\delta_a$};
% Text Node
\draw (117,120) node [anchor=north west][inner sep=0.75pt]    {$\cdot $};
% Text Node
\draw (139,164) node [anchor=north west][inner sep=0.75pt]    {$\delta_k$};
% Text Node
\draw (139,66.4) node [anchor=north west][inner sep=0.75pt]    {$\delta_{kb^{-1}}$};
% Text Node
\draw (175,66.4) node [anchor=north west][inner sep=0.75pt]    {$ \begin{array}{l}
\delta_b\\
\end{array}$};
% Text Node
\draw (200,100) node [anchor=north west][inner sep=0.75pt]    {$ \hspace{5mm}\omega_{a,h,b}$};
% Text Node
\draw (389,120) node [anchor=north west][inner sep=0.75pt]    {$\cdot $};
% Text Node
\draw (324,166) node [anchor=north west][inner sep=0.75pt]    {$\delta_{a^{-1}k}$};
% Text Node
\draw (326,75) node [anchor=north west][inner sep=0.75pt]    {$\delta_h$};
% Text Node
\draw (360,66.4) node [anchor=north west][inner sep=0.75pt]    {$ \begin{array}{l}
\delta_b\\
\end{array}$};
% Text Node
\draw (448,164) node [anchor=north west][inner sep=0.75pt]    {$\delta_k$};
% Text Node
\draw (451,68.4) node [anchor=north west][inner sep=0.75pt]    {$\delta_{a^{-1}k}$};
% Text Node
\draw (406,70) node [anchor=north west][inner sep=0.75pt]    {$\delta_a$};
% Text Node
\draw (129,110) node [anchor=north west][inner sep=0.75pt]    {};
% Text Node
\draw (99,112) node [anchor=north west][inner sep=0.75pt]    {};
% Text Node
\draw (317,118.4) node [anchor=north west][inner sep=0.75pt]    {};
% Text Node
\draw (461,115.4) node [anchor=north west][inner sep=0.75pt]    {};

\end{tikzpicture}
\end{center}

Now computing the following comultiplication,

\[\strut\smash{\raise 1.78cm\hbox{$\hspace{1mm}\Delta: \hspace{1mm}$}}\WHA{
\delta_a}{{\delta_{g^{-1}d}}}{{\delta_{g^{-1}a}}}{\delta_d}{}{}{\delta_g}\strut\smash{\raise 1.78cm\hbox{$\hspace{1mm}\rightarrow\hspace{1mm}\sum_{n}$}}\WHA{\delta_n}{\delta_{g^{-1}d}}{\delta_{g^{-1}n}}{\delta_d}{}{}{\delta_g} \strut\smash{\raise 1.78cm\hbox{$\hspace{1mm}\otimes \hspace{1mm}$}}\WHA{\delta_a}{\delta_n}{\delta_{g^{-1}a}}{\delta_{g^{-1}n}}{}{}{\delta_g}
.\] 
\end{example}

Now the action here can be described by the following,
\begin{center}
\begin{tikzpicture}[x=0.75pt,y=0.75pt,yscale=-1,xscale=1]
%uncomment if require: \path (0,300); %set diagram left start at 0, and has height of 300

%Straight Lines [id:da35668641303755466] 
\draw    (223.99,132.68) -- (223.99,177.28) ;
%Straight Lines [id:da8470984922090323] 
\draw    (223.99,57.69) -- (223.99,102.29) ;
%Shape: Arc [id:dp32893214729866416] 
\draw  [draw opacity=0] (224,154.98) .. controls (206.91,154.97) and (193.05,138.18) .. (193.05,117.48) .. controls (193.05,96.78) and (206.91,80) .. (224,79.99) -- (224.02,117.48) -- cycle ; \draw   (224,154.98) .. controls (206.91,154.97) and (193.05,138.18) .. (193.05,117.48) .. controls (193.05,96.78) and (206.91,80) .. (224,79.99) ;  

% Text Node
\draw (177.73,112) node [anchor=north west][inner sep=0.75pt]    {$\delta_g$};
% Text Node
\draw (219.03,180) node [anchor=north west][inner sep=0.75pt]    {$\delta_a$};
% Text Node
\draw (226.25,148) node [anchor=north west][inner sep=0.75pt]    {};
% Text Node
\draw (220,120) node [anchor=north west][inner sep=0.75pt]  {$\delta_{g^{-1}a}$};
% Text Node
\draw (220,103.64) node [anchor=north west][inner sep=0.75pt]    {$\delta_{g^{-1}d}$};
% Text Node
\draw (218.99,42) node [anchor=north west][inner sep=0.75pt]    {$\delta_d$};
% Text Node
\draw (275,120) node [anchor=north west][inner sep=0.75pt]    {$\cdot$};

%Straight Lines [id:da8470984922090323] 
\draw    (323.99,170) -- (323.99,90) ;
\draw   (323.99,130) -- (375,90) ;
\draw (320,65) node [anchor=north west][inner sep=0.75pt]    {$\delta_{g^{-1}d}$};
\draw (320,175) node [anchor=north west][inner sep=0.75pt]    {$\delta_{g^{-1}a}$};
\draw (380,65) node [anchor=north west][inner sep=0.75pt]    {$\delta_{d^{-1}a}$};

\draw[<-]   (480,125) -- (400,125) ;

%Shape: Arc [id:dp32893214729866416] 
\draw  [draw opacity=0] (550,154.98) .. controls (535,154.97) and (522,138.18) .. (522,117.48) .. controls (522,96.78) and (535,80) .. (535,79.99) -- (550,117.48) -- cycle ; \draw   (550,154.98) .. controls (535,154.97) and (522,138.18) .. (522,117.48) .. controls (522,96.78) and (535,80) .. (550,79.99) ; 

%Straight Lines [id:da35668641303755466] 
\draw    (550,57.69) -- (550,177.28) ;
\draw   (550,130) -- (600,90) ;
\draw   (323.99,130) -- (375,90) ;

% Text Node
\draw (545,41) node [anchor=north west][inner sep=0.75pt]    {$\delta_d$};
% Text Node
\draw (545,180) node [anchor=north west][inner sep=0.75pt]    {$\delta_a$};
% Text Node

% Text Node
\draw (588,64) node [anchor=north west][inner sep=0.75pt]    {$\delta_{d^{-1}a}$};
% Text Node
\draw (500,120) node [anchor=north west][inner sep=0.75pt]    {$\delta_g$};
\end{tikzpicture}
\end{center}

Which, when applying the $F$ symbol, produces the following picture representing an element in the path algebra. This construction will work given any semisimple module category and a progenerator of $\VecC(G)_\omega$.

\begin{center}

\begin{tikzpicture}[x=0.75pt,y=0.75pt,yscale=-1,xscale=1]

    %Straight Lines [id:da8470984922090323] 
\draw    (323.99,170) -- (323.99,90) ;
\draw   (323.99,130) -- (375,90) ;
\draw (250,120) node [anchor=north west][inner sep=0.75pt]    {$\omega_{g,d^{-1},g^{-1}a}$};
\draw (320,65) node [anchor=north west][inner sep=0.75pt]    {$\delta_{d}$};
\draw (320,175) node [anchor=north west][inner sep=0.75pt]    {$\delta_{a}$};
\draw (380,65) node [anchor=north west][inner sep=0.75pt]    {$\delta_{d^{-1}a}$};

\end{tikzpicture}
\end{center}
\begin{example}
The Fibonacci Category $\Fib$ is, the category with $2$ simple objects $1$ and $\tau$ such that $\tau\otimes \tau\cong 1\oplus \tau$. Given a fusion category $\Fib$ and a semisimple module category $M$ and a progenerator which is the direct sum of simple objects in $M$, we can produce weak Hopf algebra. When $M\cong \Fib$, this is the canonical face algebra associated to $\Fib$. The basis for the face algebra associated with $\Fib$ can be written as follows. It is a $13$ dimensional algebra spanned by the following diagrams. The basis of morphisms for the category consists of $\alpha_1:1\rightarrow 1$ and $\alpha_{\tau}:\tau\rightarrow \tau$.

\[\WHA{1}{1}{1}{1}{\alpha_1}{\alpha_1}{1} \strut\smash{\raise 1.5cm\hbox{,}}\WHA{1}{\tau}{\tau}{1}{\alpha_1}{\alpha_1}{\tau} \strut\smash{\raise 1.5cm\hbox{,}}\WHA{\tau}{\tau}{\tau}{\tau}{\alpha_\tau}{\alpha_\tau}{\tau} \strut\smash{\raise 1.5cm\hbox{,}}\WHA{\tau}{\tau}{\tau}{\tau}{\alpha_\tau}{\alpha_\tau}{1} \strut\smash{\raise 1.5cm\hbox{,}}\WHA{\tau}{1}{1}{\tau}{\alpha_\tau}{\alpha_\tau}{\tau} \strut\smash{\raise 1.5cm\hbox{,}}\WHA{\tau}{1}{\tau}{\tau}{\alpha_1}{\alpha_\tau}{\tau}\strut\smash{\raise 1.5cm\hbox{,}}\WHA{1}{\tau}{1}{\tau}{\alpha_\tau}{\alpha_1}{\tau}\]

\[\WHA{1}{\tau}{\tau}{\tau}{\alpha_\tau}{\alpha_1}{\tau} \strut\smash{\raise 1.5cm\hbox{,}}\WHA{\tau}{\tau}{\tau}{1}{\alpha_1}{\alpha_\tau}{\tau} \strut\smash{\raise 1.5cm\hbox{,}}\WHA{1}{\tau}{1}{\tau}{\alpha_1}{\alpha_\tau}{1} \strut\smash{\raise 1.5cm\hbox{,}}\WHA{\tau}{1}{\tau}{1}{\alpha_1}{\alpha_\tau}{1} \strut\smash{\raise 1.5cm\hbox{,}}\WHA{\tau}{1}{\tau}{\tau}{\alpha_\tau}{\alpha_\tau}{\tau} \strut\smash{\raise 1.5cm\hbox{,}}\WHA{\tau}{1}{\tau}{\tau}{\alpha_\tau}{\alpha_\tau}{\tau} \]

Then the multiplication, comultiplication, unit, counit, and antipode can be computed using the formulas in the appendix of \cite{BLV}. If there were an action of this weak Hopf algebra on a path algebra by the result above, it would preserve the grading of the path algebra.

 By \cite{BD12} up to monoidal equivalence the associator is trivial except for $(\tau\otimes \tau)\otimes \tau \xrightarrow[]{\alpha}\tau \otimes (\tau\otimes \tau)$. In this scenario, it follows that the $F$ symbols or $6j$ symbols can be described by the following complex number and $2\times 2$ matrix

\[\alpha=1,\hspace{1mm} A=\begin{bmatrix}
a & 1 \\
-a & -a 
\end{bmatrix}.\]

Here $a$ is a root of $x^2-x-1$. We can explicitly compute the action of the face algebra on $\KQ$ where $Q$ is constructed by an object in $\Fib$. 

\end{example}

Now it was shown in \cite{Bet25} that for the fusion category $\PSU(2)_{p-2}$, these graded fusion category actions are the only separable idempotent split based actions on the path algebra. This allows us to produce the following theorem,

\begin{thm} \label{thm:classification PSU}
All separable idempotent split based actions of $\PSU(2)_{p-2}$ up on $\KQ$ up to conjugacy are weak Hopf algebra actions on $\KQ$.
\end{thm}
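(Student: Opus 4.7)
The plan is to obtain this theorem as a direct consequence of two results already in place: the classification from \cite{Bet25} specific to $\PSU(2)_{p-2}$, and Lemma \ref{lem:idem split FH} above. Because everything structural is already proven, the argument reduces to chaining the right implications in the right order rather than any new construction.

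First, I would invoke the result from \cite{Bet25} that for the fusion category $\PSU(2)_{p-2}$, every separable based action on $\KQ$ is automatically a graded action. This is the substantive input and depends on the particular fusion rules of $\PSU(2)_{p-2}$ together with the fact that its only indecomposable semisimple module category is itself; it is cited in the paragraph preceding the theorem. Second, since the statement concerns separable \emph{idempotent split} based actions up to conjugacy, I may assume $V_1 = \K^{|\KQ_0|}$ is the canonical set of vertex projections — this is the prior reduction recalled in the paragraph before Lemma \ref{lem:idem split FH}, where it is noted that every separable subalgebra of $\KQ$ is conjugate to a sum of vertex projections.

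With these two reductions in hand, the action is a graded separable idempotent split based action of $\coRep(H)$ on $\KQ$ over $F_H$, where $H$ is the weak Hopf algebra reconstructed from $\PSU(2)_{p-2}$ together with a chosen semisimple module category. Lemma \ref{lem:idem split FH} then upgrades the lax monoidal functor $F:\coRep(H)\rightarrow \Bim(V_1)$ to a genuine monoidal functor, thereby presenting the data as a based action over the weak fiber functor $F_H$. Applying Lemma \ref{lem:based action give Hopf action} promotes this to an honest left $H$-module algebra structure on $\KQ$, and Theorem \ref{thm:WHA action mod cat} ensures this correspondence is a bijection of equivalence classes (conjugacy on one side, $J$-twisted algebra isomorphism of $H$ on the other). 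Thus every separable idempotent split based action of $\PSU(2)_{p-2}$ on $\KQ$, up to conjugacy, is a weak Hopf algebra action, which is the claim.

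The main (and really only) obstacle is purely bookkeeping: making sure that the notion of equivalence on each side matches, so that the chain of bijections composes correctly. In particular, one must check that the reduction to the idempotent split form $V_1 = \K^{|\KQ_0|}$ is compatible with the equivalence under which the classification in \cite{Bet25} is stated, and that $\PSU(2)_{p-2}$ is realized as $\coRep(H)$ for the appropriate semisimple weak Hopf algebra $H$ (which it must be, by the reconstruction theorem of \cite{Szl01} applied to a fusion category without fiber functor). Once these compatibilities are acknowledged, the theorem is essentially an assembly of prior results and no further calculation is required.
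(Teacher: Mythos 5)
Your proposal is correct and follows essentially the same route as the paper: the paper's proof likewise cites the classification result from \cite{Bet25} (that all separable idempotent split based actions of $\PSU(2)_{p-2}$ on $\KQ$ up to conjugacy are graded) and then, implicitly, Lemma \ref{lem:idem split FH} to convert graded separable idempotent split based actions into weak Hopf algebra actions. You merely spell out the intermediate steps (Lemma \ref{lem:based action give Hopf action}, Theorem \ref{thm:WHA action mod cat}) that the paper leaves implicit.
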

    
\begin{proof}
    This follows from \cite[Theorem 6.17]{Bet25} and the fact that all separable idempotent split based actions up to conjugacy are graded.
\end{proof}

\begin{remark}
    When studying the lax monoidal functor $G:\PSU(2)_{p-2}\rightarrow \VecC$. When the module category is $\PSU(2)_{p-2}^{\oplus n}$, $n\geq 2$, these actions are not weak Hopf algebra actions corresponding to the face algebra associated with a fusion category, but instead are weak Hopf algebra actions coming from the module category $\PSU(2)_{p-2}^{\oplus n}$ and the progenerator that is the direct sum of all simple objects. These weak Hopf algebra actions can still be understood in terms of data coming from our face algebra actions on path algebras since in each partition the half braiding can be expressed in terms of the $6j$ symbols for $\PSU(2)_{p-2}$ and thus can be expressed in terms of an data from an action of the face algebra on $\KQ$.
\end{remark}

Now \cite[Theorem 3.24]{Bet25} implies that any separable based action induces an idempotent split based action canonically. This process doesn't change the half braiding but alters the $V_X$ spaces by including more orthogonal idempotent projections. 

\begin{lem}
    Consider a separable based action of $\PSU(2)_{p-2}$ on $\KQ$, then this action can be understood in terms of a weak Hopf algebra action of $H$ on $\KQ$
\end{lem}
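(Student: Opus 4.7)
The plan is to reduce to the already handled idempotent split case via the canonical idempotent completion of a separable based action. The setup is exactly parallel to the sentence preceding the lemma, and the proof is essentially a concatenation of three results already established in the paper.

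First, I would invoke \cite[Theorem 3.24]{Bet25} as recalled in the paragraph above: every separable based action $(F,J,V)$ of $\PSU(2)_{p-2}$ on $\KQ$ canonically induces an idempotent split based action $(F,J,\widetilde V)$ with the \emph{same} underlying monoidal functor $F:\PSU(2)_{p-2}\rightarrow \Bim(\KQ)$ and the \emph{same} half braiding. Only the base spaces are enlarged: $\widetilde V_1$ becomes the full set of canonical vertex projections of $\KQ$, and each $\widetilde V_X\supset V_X$ is enlarged accordingly by the action of the additional idempotents. Second, I would apply Theorem \ref{thm:classification PSU}, which guarantees that up to conjugacy every separable idempotent split based action of $\PSU(2)_{p-2}$ on $\KQ$ is a graded action, together with Lemma \ref{lem:idem split FH}, which upgrades any such graded idempotent split based action to a genuine weak Hopf algebra action of some semisimple weak Hopf algebra $H$ on $\KQ$. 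Explicitly, Lemma \ref{lem:based action give Hopf action} produces the $H$-action from the half braiding of $(F,J,\widetilde V)$, which by construction coincides with the half braiding of the original $(F,J,V)$.

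The main (and essentially only) obstacle is interpretive rather than computational: the weak Hopf algebra $H$ produced this way has target algebra $H^t$ matching the enlarged base space $\widetilde V_1\cong \K^{|\KQ_0|}$, not $V_1$ itself, so one must be precise about the sense in which the original separable action is "understood in terms of" this $H$-action. The key point is that the half braiding on $\KQ$-bimodules, which by Lemma \ref{lem:based action give Hopf action} is exactly what determines the $H$-action on $\KQ$, is unchanged in the passage from $V$ to $\widetilde V$; no new braiding data is introduced by the completion. Therefore the original separable based action is recovered from the $H$-action by restricting along the natural inclusions $V_X\hookrightarrow \widetilde V_X$, and in this sense the separable action is completely encoded by a weak Hopf algebra action on $\KQ$, as claimed.
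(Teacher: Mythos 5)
Your proposal is correct and follows essentially the same route as the paper's own proof: pass to the canonically induced idempotent split based action via \cite[Theorem 3.24]{Bet25} and then invoke the already-established fact that graded separable idempotent split based actions of $\PSU(2)_{p-2}$ are weak Hopf algebra actions. Your additional care about the enlarged base space $\widetilde V_1$ versus $V_1$ and the unchanged half braiding is a welcome clarification of what the paper leaves implicit, but it is not a different argument.
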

\begin{proof}
    Let $(F,J,V)$ be a separable based action of $\PSU(2)_{p-2}$  on $\KQ$. Then this induces an idempotent split based action of $\PSU(2)_{p-2}$ on $\KQ$. These actions are understood to be weak Hopf algebra actions, and thus the result follows.
\end{proof}

This lemma states that every separable idempotent split based action of $\PSU(2)_{p-2}$ can be understood in the right context as a weak Hopf algebra action on $\KQ$. Given an idempotent split action we can recover a separable action when there is a unital subalgebra $H\leq G$. 

\section*{Acknowledgements}
The author would like to thank Corey Jones. His knowledge, advice, and words of encouragement throughout made this possible. This work was partially supported by NSF DMS 2247202.

\bibliographystyle{alpha}
\bibliography{main}

\end{document}